
\documentclass[12pt, a4paper, reqno]{amsart}
\usepackage{amsmath}
\usepackage{amsfonts}
\usepackage{amssymb}
\usepackage{color}
\usepackage{comment,graphicx}
\usepackage[T1]{fontenc}
\usepackage{url}
\usepackage{ae}

\setcounter{MaxMatrixCols}{10}

\setlength{\oddsidemargin}{0cm} \setlength{\evensidemargin}{0cm}
\setlength{\textwidth}{15.92cm} \setlength{\topmargin}{0cm}
\setlength{\textheight}{23.5cm}
\def\phi{\varphi}
\def\rho{\varrho}
\def\epsilon{\varepsilon}
\numberwithin{equation}{section}
\theoremstyle{plain}
\newtheorem{theorem}[equation]{Theorem}
\newtheorem{lemma}[equation]{Lemma}
\newtheorem{proposition}[equation]{Proposition}

\theoremstyle{definition}
\newtheorem{definition}[equation]{Definition}

\theoremstyle{remark}
\newtheorem{remark}[equation]{Remark}

\renewcommand{\leq}{\leqslant}
\renewcommand{\geq}{\geqslant}

\pagestyle{headings}

\begin{document}
\author[D. Drihem]{Douadi Drihem \ }
\date{\today }
\title[$f\mapsto |f|^{\mu },\mu >0$ on Besov spaces of power weights]{Powers
functions in Besov spaces of power weights. Necessary conditions}
\maketitle

\begin{abstract}
The aim of this paper is to present some necessary conditions for the
boundedness of the mapping $f\mapsto |f|^{\mu },\mu >0$ on Besov spaces
equipped with power weights.$\newline
$

\textit{MSC 2010\/}: 46E35, 47H30.

\textit{Key Words and Phrases}: Besov spaces, Embeddings, Composition
operators.
\end{abstract}

\section{Introduction}

Let $G:\mathbb{R}\rightarrow \mathbb{R}$ be a continuous function. The
associate composition operator $T_{G}$ is defined by $f\mapsto G(f)$. The
study of the action of this operator on functional spaces comes not only
from theoretical purposes, but also from applications to partial
differential equations. Many nonlinear equations are stated in a form where
the nonlinear part is given by a composition operator, for example the
nonlinear equations 
\begin{align*}
\left\{ 
\begin{array}{ccc}
\partial _{t}f(t,x)-\Delta f(t,x)= T_{G}(f(t,x)), & \quad (t,x)\in \mathbb{\
R}^{+}\mathbb{\times R}^{n}. &  \\ 
\text{ \ \ \ \ \ \ \ \ }f(0,x)=f_{0}(x). &  & 
\end{array}%
\right.
\end{align*}%
To study this equation in functional spaces such as Sobolev spaces we need
to estimate the nonlinear term $T_{G}$ in such spaces, see for example \cite%
{F98}.

Another motivation to study the composition operators in function spaces can
be found in \cite{CFZ11} and the references therein.

Concerning the composition operators in Sobolev, Besov and Triebel-Lizorkin
spaces we refer to \cite{Bo91}, \cite{BK}, \cite{BCS06}, \cite{BMS10} and 
\cite{Runst-Sickel1996}. Recently the author in \cite{Dr21} gave necessary
and sufficient conditions on $G$ such that 
\begin{equation}
T_{G}(W_{p}^{m}(\mathbb{R}^{n},|\cdot |^{\alpha }))\subset W_{p}^{m}(\mathbb{%
\ R}^{n},|\cdot |^{\alpha }),  \label{Dah1}
\end{equation}%
with some suitable assumptions on $m,p$ and $\alpha $. More precisely, he
proved the following result. Let $m=2,3,...$ and let $1<p<\infty ,0\leq
\alpha <n(p-1)$. Assume that $m>\frac{n+\alpha }{p}$. Then the composition
operator $T_{G}$ satisfies \eqref{Dah1} if and only if $G$ satisfies the
following conditions: 
\begin{equation*}
G(0)=0\quad \text{and}\quad G^{(m)}\in \mathbb{L}_{\mathrm{loc}}^{p}(\mathbb{%
\ R}).
\end{equation*}%
Some sufficient conditions on $G$ which ensure 
\begin{equation*}
T_{G}(F_{p,q}^{s}(\mathbb{R}^{n},|\cdot |^{\alpha }))\subset F_{p,q}^{s}(%
\mathbb{R}^{n},|\cdot |^{\alpha })
\end{equation*}%
with application to semilinear parabolic equations are given in\ \cite%
{DrBanach}. Here $F_{p,q}^{s}(\mathbb{R}^{n},|\cdot |^{\alpha })$ denotes
the Triebel-Lizorkin space.

The main purpose of this paper is to present necessary conditions about
mapping properties of power type like%
\begin{equation}
G_{\mu }:f\longmapsto |f|^{\mu },\quad \mu >0,  \label{Problem1}
\end{equation}%
in the framework of Besov spaces equipped with power weights.

For $\mu =1$, Bourdaud and Meyer \cite{Bourdaud-Meyer91} proved that $%
G_{1}(f)\in B_{p,q}^{s}$ for all $f\in B_{p,q}^{s},1\leq p,q\leq \infty $
and $0<s<1+\frac{1}{p}$. Independently, the same result was obtained by
Oswald \cite{Os91}.

Since the function $G_{\mu }$, $\mu <1$ is not locally Lipschitz continuous,
the mapping $G_{\mu },\mu <1$, cannot be a mapping in the same Besov space.
In \cite[5.4.4/1]{Runst-Sickel1996} we obtain that $G_{1}(f)\in B_{\frac{p}{%
\mu },\mu }^{s\mu }$ for all $f\in B_{p,q}^{s},1\leq p,q\leq \infty $ and $%
0<s<1+\frac{1}{p}.$

Kateb \cite{K03} gave a complete treatment of \eqref{assumptions} with $\mu
>1$. We mention that the action of these type of operators in function
spaces play an important role in mathematical analysis, see \cite{CW90}
where the mapping $f\longmapsto |f|^{\mu }f,\mu >0$ plays a crucial role.

The paper is organized as follows. First we give some preliminaries where we
fix some notation and recall some basic facts on Besov spaces equipped with
power weights. Also, we give some key technical results needed in the proof
of the main statement. The main statement is formulated in the
\textquotedblleft Proof of the results\textquotedblright\ section. More
precisely, the following results are true.\textbf{\ }Let%
\begin{equation}
1\leq p<\infty ,\quad \alpha >-n\quad \text{and}\quad s>\max \Big(0,\frac{%
\alpha +n}{p}-n\Big).  \label{assumptions}
\end{equation}

\begin{theorem}
\label{Result1}Let $1\leq p,q<\infty $ and \eqref{assumptions} with $n=1$.
Assume that $\mu $ is not an even integer. In order to have embeddings like%
\begin{equation*}
G_{\mu }(B_{p,q}^{s}(\mathbb{R},|\cdot |^{\alpha }))\subset B_{p,q}^{s}(%
\mathbb{R},|\cdot |^{\alpha }),
\end{equation*}%
the condition $s<\mu +\frac{1+\alpha }{p}$\ becomes necessary$.$
\end{theorem}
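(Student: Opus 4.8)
The plan is to refute the inclusion by exhibiting a single function in the source space whose image under $G_{\mu}$ leaves the target space as soon as $s\ge \mu+\frac{1+\alpha}{p}$; a set-theoretic counterexample suffices, so no boundedness or closed-graph argument is needed. Fix $\psi\in C^{\infty}_{c}(\R)$ with $\psi\equiv 1$ on $[-1,1]$, and choose $f\in C^{\infty}_{c}(\R)$ with $f(x)=x$ on $[-1,1]$. Because $\alpha>-1$ the weight $|\cdot|^{\alpha}$ is locally integrable and of polynomial growth, so every smooth compactly supported function—in particular $f$—belongs to $B_{p,q}^{s}(\R,|\cdot|^{\alpha})$ for the parameters in \eqref{assumptions}. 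Since $G_{\mu}(f)(x)=|f(x)|^{\mu}=|x|^{\mu}$ on $[-1,1]$, it is enough to prove that the localized power $h:=|x|^{\mu}\psi$ does \emph{not} lie in $B_{p,q}^{s}(\R,|\cdot|^{\alpha})$ whenever $s\ge \mu+\frac{1+\alpha}{p}$ and $\mu\notin 2\Nzero$.

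The heart of the argument is a Littlewood--Paley lower bound exploiting the exact homogeneity of both $|x|^{\mu}$ and the weight. Write $g(x)=|x|^{\mu}$ as a tempered distribution and let $\Delta_{j}$ denote the (annular) dyadic frequency projections, so that $\|\cdot\|_{B_{p,q}^{s}(\R,|\cdot|^{\alpha})}\approx\big\|(2^{js}\Delta_{j}\,\cdot\,)_{j}\big\|_{\ell^{q}(L^{p}(\R,|\cdot|^{\alpha}))}$. Since $g$ is homogeneous of degree $\mu$ and the projections are dilation covariant, one has $\Delta_{j}g(x)=2^{-j\mu}(\Delta_{0}g)(2^{j}x)$. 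The scaling $\|G(\lambda\,\cdot)\|_{L^{p}(\R,|\cdot|^{\alpha})}=\lambda^{-\frac{1+\alpha}{p}}\|G\|_{L^{p}(\R,|\cdot|^{\alpha})}$ then yields
\[
2^{js}\,\|\Delta_{j}g\|_{L^{p}(\R,|\cdot|^{\alpha})}=2^{\,j\left(s-\mu-\frac{1+\alpha}{p}\right)}\,\|\Delta_{0}g\|_{L^{p}(\R,|\cdot|^{\alpha})}.
\]
Here the hypothesis $\mu\notin 2\Nzero$ is exactly what guarantees that $|x|^{\mu}$ is not a polynomial, so $\Delta_{0}g\not\equiv 0$ and $\|\Delta_{0}g\|_{L^{p}(\R,|\cdot|^{\alpha})}>0$ (this norm being finite because $\Delta_{0}g$ is Schwartz and $\alpha>-1$). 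Consequently, for $s\ge\mu+\frac{1+\alpha}{p}$ the sequence above does not tend to $0$, and since $q<\infty$ its $\ell^{q}$ norm diverges.

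It remains to transfer this divergent lower bound from the homogeneous $g$ to the localized $h=|x|^{\mu}\psi$, which is where I expect the main obstacle to lie. The idea is that membership in $B_{p,q}^{s}(\R,|\cdot|^{\alpha})$ is governed by the singularity at the origin, where $h=g$. Concretely, $\Delta_{j}h-\Delta_{j}g=-\Delta_{j}\!\big(|x|^{\mu}(1-\psi)\big)$, and since $|x|^{\mu}(1-\psi)$ vanishes on $[-1,1]$ while the kernel of $\Delta_{j}$ is concentrated at scale $2^{-j}$ with rapidly decaying tails, one gets $|\Delta_{j}h(x)-\Delta_{j}g(x)|\lesssim 2^{-jN}$ uniformly for $|x|\le\tfrac12$ and every $N$. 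On the other hand, restricting the self-similarity relation to $|x|\le\tfrac12$ and letting $j\to\infty$ (dominated convergence) gives $\|\Delta_{j}g\|_{L^{p}(\{|x|\le 1/2\},|\cdot|^{\alpha})}\gtrsim 2^{-j(\mu+\frac{1+\alpha}{p})}$. Combining the two, $\|\Delta_{j}h\|_{L^{p}(\R,|\cdot|^{\alpha})}\gtrsim 2^{-j(\mu+\frac{1+\alpha}{p})}$ for large $j$, so $2^{js}\|\Delta_{j}h\|_{L^{p}(\R,|\cdot|^{\alpha})}$ does not vanish and its $\ell^{q}$ norm diverges; hence $h\notin B_{p,q}^{s}(\R,|\cdot|^{\alpha})$, contradicting $G_{\mu}(f)\in B_{p,q}^{s}(\R,|\cdot|^{\alpha})$ and forcing $s<\mu+\frac{1+\alpha}{p}$. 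The delicate points throughout are the sharp lower bound—establishing genuine non-membership rather than the mere failure of a naive upper estimate—and the careful tail bookkeeping for $\Delta_{j}(h-g)$ against the singular weight near the origin; for these I would invoke the Littlewood--Paley and atomic characterizations of weighted Besov spaces recorded in the preliminaries.
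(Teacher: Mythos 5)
Your argument is correct in substance, but it takes a genuinely different route from the paper. The paper stays entirely on the real-variable side: with essentially the same test function $f(x)=x\theta(|x|)$, it passes to an auxiliary space $B_{p,q}^{\mu+\frac{1+\alpha_{1}}{p}}(\mathbb{R},|\cdot|^{\alpha_{1}})$ with a smaller weight exponent $\alpha_{1}<\alpha$ chosen so that $0<\mu+\frac{1+\alpha_{1}}{p}<1$ (so that the first-order ball-means-of-differences characterization of Theorem \ref{means-diff-cha1} applies), derives the pointwise lower bound $d_{t}^{1}(G_{\mu}(f))(x)\geq ct^{\mu}$ on a set of measure $\approx t$ near the origin from an elementary inequality of Runst--Sickel, and transfers the resulting non-membership back to $B_{p,q}^{s}(\mathbb{R},|\cdot|^{\alpha})$ via the embedding of Theorem \ref{embeddings5}, with a separate case for $p\leq 1$. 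You instead work on the Fourier side, exploiting the exact homogeneity of $|x|^{\mu}$, the dilation covariance of the annular Littlewood--Paley pieces, and the scaling law of $L^{p}(\mathbb{R},|\cdot|^{\alpha})$, plus a rapid-decay tail estimate to localize; this avoids the differences characterization, the auxiliary exponent $\alpha_{1}$, and any case distinction in $p$, at the price of having to know that $\mathcal{F}(|x|^{\mu})$ is not supported at the origin --- which is exactly where $\mu\notin 2\Nzero$ enters in both treatments. Two points you should make explicit to close the write-up: the dilation identity for $\Delta_{j}$ holds only for the annular pieces $j\geq 1$ (the paper's $\varphi_{j}$ are exact dilates of $\varphi_{1}$ for $j\geq1$), and reducing to $h=|x|^{\mu}\psi$ is not literally sufficient, since $|f|^{\mu}$ and $h$ differ off $[-1,1]$ and $|f|^{\mu}$ may have further non-smooth points at other zeros of $f$; however, your tail estimate uses only that the perturbation is bounded, compactly supported and vanishes on $[-1,1]$ (hence lies at distance $\geq\frac12$ from $\{|x|\leq\frac12\}$), so it applies verbatim to $|f|^{\mu}-|x|^{\mu}$ in place of $h-|x|^{\mu}$ and the gap is only cosmetic.
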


\begin{theorem}
\label{Result2}Let $1\leq q\leq \infty $ and \eqref{assumptions}.$\newline
\mathrm{(i)}$ Let $\alpha \geq 0$. Assume that $\mu >1,s<\frac{n+\alpha }{p}$
or $s=\frac{n+\alpha }{p}$\ and\ $q>1$. To study\ \eqref{Problem1}\ in the
framework of Besov spaces of power weights\ one needs in addition $f\in
L^{\infty }(\mathbb{R}^{n}).\newline
\mathrm{(ii)}$ Assume that $\mu <1\ $and $s>\frac{n+\alpha }{p}$. The
condition $f\notin L^{\infty }(\mathbb{R}^{n})$\ is necessary for the
solvability of \eqref{Problem1}\ in the framework of Besov spaces of power
weights$.$
\end{theorem}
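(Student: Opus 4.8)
The plan centers on Theorem~\ref{Result2}, whose two parts I would treat by a common philosophy: exploit the self-improving scaling of the power map $G_\mu$ against the sharp Sobolev-type embedding of $B^s_{p,q}(\R^n,|\cdot|^\alpha)$ into $L^\infty$.

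Let me sketch how I would prove this.

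\medskip

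\textbf{Part (i) (the case $\mu>1$).}
The strategy is to show that if $G_\mu$ maps the weighted Besov space into itself, then membership $f\in B^s_{p,q}(\R^n,|\cdot|^\alpha)$ alone is \emph{insufficient} to control $G_\mu(f)$ unless we additionally know $f\in L^\infty$. I would argue by contradiction: suppose $G_\mu(B^s_{p,q})\subset B^s_{p,q}$ holds with no boundedness assumption. The regime hypothesis $s<\frac{n+\alpha}{p}$ (or $s=\frac{n+\alpha}{p}$ with $q>1$) is exactly the \emph{failure} of the embedding $B^s_{p,q}(\R^n,|\cdot|^\alpha)\hookrightarrow L^\infty$, so I can select an \emph{unbounded} test function $f\in B^s_{p,q}(\R^n,|\cdot|^\alpha)$. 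I would build such an $f$ as a lacunary sum $f(x)=\sum_{j} a_j \phi(2^{j}(x-x_j))$ of rescaled, shifted bumps with carefully chosen amplitudes $a_j\to\infty$ and centers $x_j$, calibrating $a_j$ and the dilation scales so that the weighted Besov norm converges (this is where $s<\frac{n+\alpha}{p}$ enters, forcing the norm-contribution exponents to be summable) while $\|f\|_\infty=\infty$. The point is that $G_\mu$ with $\mu>1$ amplifies large values: $|f|^\mu$ concentrates mass at the peaks $a_j^\mu$, and I would estimate $\|G_\mu(f)\|_{B^s_{p,q}}$ from below using the localized atomic structure to show it diverges. Hence without an a priori $L^\infty$ bound the image leaves the space, proving that $f\in L^\infty$ is an indispensable additional hypothesis.

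\medskip

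\textbf{Part (ii) (the case $\mu<1$).}
Here the logic reverses. Now $s>\frac{n+\alpha}{p}$ places us in the regime where $B^s_{p,q}(\R^n,|\cdot|^\alpha)\hookrightarrow L^\infty$, so every input is automatically bounded and in particular continuous. Since $\mu<1$, the map $G_\mu$ fails to be Lipschitz at the origin: near a zero of $f$ the function $|f|^\mu$ has an infinite-slope cusp. I would exploit this by taking a smooth bounded $f\in B^s_{p,q}(\R^n,|\cdot|^\alpha)$ that vanishes transversally on a hypersurface (e.g.\ a linear function cut off), and show that $|f|^\mu$ then has a singularity too strong to lie in $B^s_{p,q}$ with the same $s$—unless one is allowed to use inputs that are \emph{not} in $L^\infty$. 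More precisely, I would demonstrate that the only way to make the cusp-behavior compatible with solvability of \eqref{Problem1} in this space is to admit unbounded $f$, so that the cancellation/scaling in the Besov norm can absorb the cusp; thus $f\notin L^\infty$ becomes a necessary feature. The estimate of $\||f|^\mu\|_{B^s_{p,q}}$ near the zero set, via a Littlewood--Paley or modulus-of-continuity computation showing the $s$-th order smoothness is destroyed, is the technical heart.

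\medskip

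The main obstacle, in both parts, is the weighted geometry: the power weight $|\cdot|^\alpha$ is anisotropic around the origin, so the test functions and their rescalings must be \emph{placed} relative to the origin with care, and every Besov-norm estimate must track the interaction between the dilation scale $2^{-j}$, the center $x_j$, and the weight exponent $\alpha$. Getting the summability thresholds to match exactly the stated borderline condition $s=\frac{n+\alpha}{p}$ (and the role of $q>1$ there) is the delicate calibration I expect to consume most of the work; I would handle it by normalizing so each bump sits at distance comparable to its own scale from the origin, reducing the weighted estimates to unweighted ones up to controllable constants.
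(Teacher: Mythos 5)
There is a genuine gap, and it is concentrated in part (ii). Your proposed test function there --- a smooth $f$ vanishing \emph{transversally} on a hypersurface, e.g.\ a cut-off linear function --- does not produce a counterexample in the stated range. If $f$ behaves like $x_{1}$ near its zero set, then $|f|^{\mu }\sim |x_{1}|^{\mu }$, and such a function belongs to $B_{p,q}^{s}(\Rn ,|\cdot |^{\alpha })$ whenever $s<\mu +\frac{1+\alpha }{p}$ (this is precisely the threshold governing Theorem \ref{Result1}, not Theorem \ref{Result2}). Since the hypothesis of (ii) is only $s>\frac{n+\alpha }{p}$, one can easily have $\frac{n+\alpha }{p}<s<\mu +\frac{1+\alpha }{p}$ (take $n=1$, $\alpha =0$, $p=2$, $\mu =0.9$, $s=0.6$), and then your $|f|^{\mu }$ stays in the space and nothing is proved. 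The paper instead takes a \emph{point} cusp $f(x)=|x|^{\delta }\varrho (|x|)$ with the exponent $\delta $ tuned into the nonempty window $s-\frac{n+\alpha }{p}<\delta <\frac{1}{\mu }\bigl(s-\frac{n+\alpha }{p}\bigr)$: Lemma \ref{Si-funct1} then gives $f\in B_{p,q}^{s}(\Rn ,|\cdot |^{\alpha })\cap L^{\infty }$ on the nose, while $f^{\mu }=|x|^{\delta \mu }\varrho ^{\mu }$ falls just below the membership threshold. The quantitative tuning of the cusp order --- not the mere non-Lipschitz character of $G_{\mu }$ at $0$ --- is the whole argument, and it is absent from your sketch.

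For part (i) your overall logic (exhibit an unbounded element of the space whose image leaves the space) matches the paper, but your route is both heavier and incomplete. The paper uses the single radial function $f(x)=\varrho (x)|x|^{-\tau }$ with $\bigl(\frac{n+\alpha }{p}-s\bigr)\frac{1}{\mu }<\tau <\frac{n+\alpha }{p}-s$, so that Lemma \ref{Si-funct1} immediately gives $f\in B_{p,q}^{s}(\Rn ,|\cdot |^{\alpha })\setminus L^{\infty }$ and $|f|^{\mu }=\varrho ^{\mu }|x|^{-\tau \mu }\notin B_{p,q}^{s}(\Rn ,|\cdot |^{\alpha })$; no lower bound for a lacunary sum is ever needed. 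Your bump-sum construction would additionally require a localization lemma bounding $\|\sum_{j}g_{j}\|_{B_{p,q}^{s}}$ from below for disjointly supported pieces at different scales and positions relative to the origin --- a nontrivial step you do not supply. Finally, the borderline case $s=\frac{n+\alpha }{p}$, $q>1$, which you defer to ``delicate calibration,'' is handled in the paper by a completely different explicit function, the logarithmic $f_{\lambda ,\sigma }$ of \eqref{triebel-function} with $\lambda =1-\frac{1}{q}$, together with Lemma \ref{Bourdaud-Triebel}; the observation that $\mu \bigl(1-\frac{1}{q}\bigr)>1-\frac{1}{q}$ pushes $|f|^{\mu }$ out of the admissible set $U_{q}$ is what closes that case, and your sketch contains no substitute for it.
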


\begin{theorem}
\label{Result3 copy(1)}Let $1\leq q\leq \infty $ and \eqref{assumptions}
with $n=1$. Assume that $\mu <1,\frac{\alpha }{p\mu }\leq s\leq \frac{%
1+\alpha }{p}\ $and $0<\alpha <p-1$. The condition $f\notin L^{\infty }(%
\mathbb{R})$\ is necessary for the solvability of \eqref{Problem1}\ in the
framework of Besov spaces of power weights$.$
\end{theorem}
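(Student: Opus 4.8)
The plan is to read ``$f\notin L^\infty$ is necessary for solvability'' as the assertion that boundedness of $f$ does not suffice: I will exhibit a function $f\in L^\infty(\mathbb{R})\cap B^s_{p,q}(\mathbb{R},|\cdot|^\alpha)$ for which $G_\mu(f)=|f|^\mu\notin B^s_{p,q}(\mathbb{R},|\cdot|^\alpha)$. Since the hypotheses place us in the subcritical range $s\le\frac{1+\alpha}{p}$, where $B^s_{p,q}(\mathbb{R},|\cdot|^\alpha)$ does not embed into $L^\infty$ and unbounded elements are available, such a counterexample shows that any $f$ whose image stays in the space must be unbounded. The driving mechanism is that for $\mu<1$ the map $t\mapsto|t|^\mu$ amplifies small values, so a bounded $f$ with just enough decay to be weighted-$L^p$ integrable can be sent to a function that is no longer integrable against the growing weight $|x|^\alpha$ (recall $\alpha>0$ here).

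Concretely, I would build $f$ from a lacunary sum of disjointly supported bumps escaping to infinity. Fix $\psi\in C_c^\infty(\mathbb{R})$, $\psi\ge0$, $\psi\not\equiv0$, supported in a unit interval, and set
\[
f(x)=\sum_{k\ge k_0} h_k\,\psi(x-2^k),\qquad h_k=2^{-k(\alpha/p+\epsilon)},
\]
with $\epsilon>0$ to be fixed and $k_0$ large enough that the supports are pairwise disjoint. Then $f\ge0$, $\|f\|_{L^\infty}=\sup_k h_k\,\|\psi\|_{L^\infty}<\infty$, and $|f|^\mu=\sum_k h_k^\mu\,\psi(\cdot-2^k)^\mu$ because the supports are disjoint. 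The disjointness and the lacunary placement make all relevant norms essentially decouple into a weighted sum over $k$; on the $k$-th bump the weight is comparable to $2^{k\alpha}$ and the measure is comparable to $1$. A direct computation then gives $\|f\|_{L^p(\mathbb{R},|\cdot|^\alpha)}^p\approx\sum_k h_k^p\,2^{k\alpha}=\sum_k 2^{-pk\epsilon}<\infty$, and, since each bump is smooth of unit width, $f\in B^s_{p,q}(\mathbb{R},|\cdot|^\alpha)$ for every $s$ by the same bookkeeping applied to the Littlewood--Paley blocks. Here I use $0<\alpha<p-1$ so that $|x|^\alpha\in A_p(\mathbb{R})$ and the weighted Littlewood--Paley theory underlying $B^s_{p,q}(\mathbb{R},|\cdot|^\alpha)$ is available.

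For the image, the same bookkeeping yields $\||f|^\mu\|_{L^p(\mathbb{R},|\cdot|^\alpha)}^p\approx\sum_k h_k^{\mu p}2^{k\alpha}=\sum_k 2^{k(\alpha(1-\mu)-\mu p\epsilon)}$, which diverges as soon as $\epsilon<\frac{\alpha(1-\mu)}{\mu p}$; because $\alpha>0$ and $\mu<1$ this interval of admissible $\epsilon$ is nonempty, so I may fix such an $\epsilon$. Thus $|f|^\mu\notin L^p(\mathbb{R},|\cdot|^\alpha)$, and since $s\ge\frac{\alpha}{p\mu}>0$ the embedding $B^s_{p,q}(\mathbb{R},|\cdot|^\alpha)\hookrightarrow L^p(\mathbb{R},|\cdot|^\alpha)$ forces $|f|^\mu\notin B^s_{p,q}(\mathbb{R},|\cdot|^\alpha)$. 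This completes the counterexample and proves the claim.

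The step I expect to require the most care is the sharp decoupling of the weighted Besov norm over the lacunary family, that is, justifying rigorously that $\|f\|_{B^s_{p,q}(\mathbb{R},|\cdot|^\alpha)}$ (and the $L^p$ norm used through the embedding) splits, up to constants, into the weighted $\ell^q$ sum of the individual bump contributions, uniformly in $k$. This is where the $A_p$ hypothesis $0<\alpha<p-1$ and the lacunary spacing are essential, and where the endpoint exponent $q=\infty$ and the precise lower bound $s\ge\frac{\alpha}{p\mu}$ (guaranteeing $s>0$, hence the embedding into weighted $L^p$) must be handled. If one instead prefers to make the smoothness term the divergent one, the widths of the bumps can be tuned so that the divergence threshold reproduces exactly the stated lower bound on $s$.
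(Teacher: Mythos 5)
Your argument is essentially correct, but it follows a genuinely different route from the paper's. You build a lacunary train of unit-width bumps escaping to infinity, $f=\sum_{k}h_{k}\psi(\cdot-2^{k})$ with $h_{k}=2^{-k(\alpha/p+\epsilon)}$, so that $f\in B_{p,q}^{s}(\mathbb{R},|\cdot|^{\alpha})\cap L^{\infty}(\mathbb{R})$ while $|f|^{\mu}$ already fails to lie in $L^{p}(\mathbb{R},|\cdot|^{\alpha})$; the obstruction you exploit is purely the growth of the weight at infinity combined with the concavity of $t\mapsto t^{\mu}$ for $\mu<1$, and you conclude via the elementary embedding $B_{p,q}^{s}(\mathbb{R},|\cdot|^{\alpha})\hookrightarrow L^{p}(\mathbb{R},|\cdot|^{\alpha})$ valid for $s>0$ and $p,q\geq 1$. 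Two remarks. First, the step you flag as delicate (sharp decoupling of the weighted Besov norm over the lacunary family) is not actually needed: since $\sum_{k}h_{k}2^{k\alpha/p}=\sum_{k}2^{-k\epsilon}<\infty$, the crude triangle inequality $\big\|f\big\|_{B_{p,q}^{s}(\mathbb{R},|\cdot|^{\alpha})}\leq\sum_{k}h_{k}\big\|\psi(\cdot-2^{k})\big\|_{B_{p,q}^{s}(\mathbb{R},|\cdot|^{\alpha})}\lesssim\sum_{k}h_{k}2^{k\alpha/p}$ suffices, and no $A_{p}$ or weighted Littlewood--Paley decoupling is required; consequently your proof only uses $s>0$, $\alpha>0$, $\mu<1$ and in fact covers a wider range of $s$ than the theorem asserts. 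The paper instead constructs a \emph{local} counterexample concentrated at the origin, $f(x)=|x|^{\delta}\big(\sin^{2}(\tfrac12|x|^{-\beta})\big)^{1/\mu}\varrho(|x|)$, shows via Proposition \ref{Key-1} that it belongs to $B_{p,\infty}^{\sigma}(\mathbb{R},|\cdot|^{\alpha})$ with $\sigma=\frac{\delta+\frac{1+\alpha}{p}}{\beta+1}$, and then combines a weighted-to-unweighted Sobolev embedding with Bourdaud's sharpness result \cite[Proposition 3]{Bourdaud04} to show that $f^{\mu}=|x|^{\delta\mu}\sin^{2}(\tfrac12|x|^{-\beta})\varrho(|x|)^{\mu}$ has too little \emph{smoothness} to lie in the space; this is precisely where the constraints $\frac{\alpha}{p\mu}\leq s\leq\frac{1+\alpha}{p}$ and $0<\alpha<p-1$ enter. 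What each approach buys: yours is shorter, more elementary, and applies for every $s>0$, but it detects only the integrability-against-the-weight obstruction and degenerates when $\alpha=0$; the paper's detects the genuine loss-of-smoothness obstruction, which persists in the unweighted setting and is the phenomenon the composition-operator literature is concerned with.
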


\section{Function spaces}

In this section we present the Fourier analytical definition of Besov spaces
equipped with power weights, and their basic properties. First, we need some
notation. As usual, $\mathbb{R}^{n}$ denotes the $n$-dimensional real
Euclidean space, $\mathbb{N} $ the collection of all natural numbers and $%
\mathbb{N}_{0}=\mathbb{N}\cup \{0\}$. The letter $\mathbb{Z}$ stands for the
set of all integer numbers.

Let $\alpha \in \mathbb{R}$ and $0<p<\infty $. The weighted Lebesgue space $%
L^{p}(\mathbb{R}^{n},|\cdot |^{\alpha })$ contains all measurable functions
such that 
\begin{equation*}
\big\|f\big\|_{L^{p}(\mathbb{R}^{n},|\cdot |^{\alpha })}=\Big(\int_{\mathbb{R%
}^{n}}\left\vert f(x)\right\vert ^{p}|x|^{\alpha }dx\Big)^{1/p}<\infty .
\end{equation*}

If $\alpha =0$, then we put $L^{p}(\mathbb{R}^{n},|\cdot |^{0})=L^{p}$ and $%
\big\|f\big\|_{L^{p}(\mathbb{R}^{n},|\cdot |^{0})}=\big\|f\big\|_{p}$.

The symbol $\mathcal{S}(\mathbb{R}^{n})$ is used in place of the set of all
Schwartz functions defined on $\mathbb{R}^{n}$ and we denote by $\mathcal{S}%
^{\prime }(\mathbb{R}^{n})$ the dual space of all tempered distributions on $%
\mathbb{R}^{n}$. We define the Fourier transform of a function $f\in 
\mathcal{S}(\mathbb{R}^{n})$ by%
\begin{equation*}
\mathcal{F(}f)(\xi )=\left( 2\pi \right) ^{-n/2}\int_{\mathbb{R}%
^{n}}e^{-ix\cdot \xi }f(x)dx,\quad \xi \in \mathbb{R}^{n}.
\end{equation*}%
Its inverse is denoted by $\mathcal{F}^{-1}f$. Both $\mathcal{F}$ and $%
\mathcal{F}^{-1}$ are extended to the dual Schwartz space $\mathcal{S}%
^{\prime }(\mathbb{R}^{n})$ in the usual way.

Next, we define the function spaces $B_{p,q}^{s}(\mathbb{R}^{n},|\cdot
|^{\alpha })$. We first need the concept of a smooth dyadic resolution of
unity. Let $\psi $ be a function in $\mathcal{S}(\mathbb{R}^{n})$ satisfying 
$\psi (x)=1$ for $\lvert x\rvert \leq 1$ and $\psi (x)=0$ for $\lvert
x\rvert \geq \frac{3}{2}$. We put $\varphi _{0}(x)=\psi (x),\,\varphi
_{1}(x)=\psi (\frac{x}{2})-\psi (x)$ and $\varphi _{j}(x)=\varphi
_{1}(2^{-j+1}x)\ $for$\ j=2,3,....$ Then we have supp$\varphi _{j}\subset
\{x\in {\mathbb{R}^{n}}:2^{j-1}\leq \lvert x\rvert \leq 3\cdot 2^{j-1}\}\ $%
and $\sum_{j=0}^{\infty }\varphi _{j}(x)=1$ for all $x\in {\mathbb{R}^{n}}$.
The system of functions $\{\varphi _{j}\}_{j\in \mathbb{N}_{0}}$ is called a
smooth dyadic resolution of unity. Thus we obtain the Littlewood-Paley
decomposition 
\begin{equation*}
f=\sum_{j=0}^{\infty }\mathcal{F}^{-1}\varphi _{j}\ast f
\end{equation*}%
for all $f\in \mathcal{S}^{\prime }(\mathbb{R}^{n})$ (convergence in $%
\mathcal{S}^{\prime }(\mathbb{R}^{n})$).

We are now in a position to state the definition of Besov spaces equipped
with power weights.

\begin{definition}
\label{def-herz-Besov}Let $\alpha ,s\in \mathbb{R}$, $0<p<\infty $\ and $%
0<q\leq \infty $. The Besov space $B_{p,q}^{s}(\mathbb{R}^{n},|\cdot
|^{\alpha })$ is the collection of all $f\in \mathcal{S}^{\prime }(\mathbb{R}%
^{n})$ such that 
\begin{equation*}
\big\Vert f\big\Vert_{B_{p,q}^{s}(\mathbb{R}^{n},|\cdot |^{\alpha })}=\Big(%
\sum_{j=0}^{\infty }2^{jsq}\big\Vert\mathcal{F}^{-1}\varphi _{j}\ast f%
\big\Vert_{L^{p}(\mathbb{R}^{n},|\cdot |^{\alpha })}^{q}\Big)^{1/q}<\infty ,
\end{equation*}%
with the obvious modification if $q=\infty $.
\end{definition}

\begin{remark}
Let\textrm{\ }$s\in \mathbb{R},0<p<\infty ,0<q\leq \infty $ and $\alpha >-n$%
. The spaces\textrm{\ }$B_{p,q}^{s}(\mathbb{R}^{n},|\cdot |^{\alpha })$ are
independent of the particular choice of the smooth dyadic resolution of unity%
\textrm{\ }$\{\varphi _{j}\}_{j\in \mathbb{N}_{0}}$ (in the sense of$\mathrm{%
\ }$equivalent quasi-norms). In particular $B_{p,q}^{s}(\mathbb{R}%
^{n},|\cdot |^{\alpha })$\textrm{\ }are quasi-Banach spaces and if\textrm{\ }%
$p,q\geq 1$, then $B_{p,q}^{s}(\mathbb{R}^{n},|\cdot |^{\alpha })$\ are
Banach spaces, see \cite{Bui82}.
\end{remark}

Now we give the definition of the spaces $B_{p,q}^{s}$.

\begin{definition}
Let $s\in \mathbb{R}$\ and $0<p,q\leq \infty $. The Besov space $B_{p,q}^{s}$
is the collection of all $f\in \mathcal{S}^{\prime }(\mathbb{R}^{n})$\ such
that 
\begin{equation*}
\big\|f\big\|_{B_{p,q}^{s}}=\Big(\sum\limits_{j=0}^{\infty }2^{jsq}\big\|%
\mathcal{F}^{-1}\varphi _{j}\ast f\big\|_{p}^{q}\Big)^{1/q}<\infty ,
\end{equation*}%
with the obvious modification if $q=\infty $.
\end{definition}

The theory of the spaces $B_{p,q}^{s}$ has been developed in detail in \cite%
{Triebel1983} but has a longer history already including many contributors;
we do not want to discuss this here. Clearly, for $s\in \mathbb{R}%
,0<p<\infty $ and $0<q\leq \infty ,$%
\begin{equation*}
B_{p,q}^{s}(\mathbb{R}^{n},|\cdot |^{\alpha })=B_{p,q}^{s}\quad \text{if}%
\quad \alpha =0.
\end{equation*}

To prove our results of this paper we need some embeddings. The following
statement holds by \cite[Theorem 5.9]{Drihem2013a} and \cite{MM12}.

\begin{theorem}
\label{embeddings5}\textit{Let }$\alpha _{1},\alpha _{2},s_{1},s_{2}\in 
\mathbb{R},0<p_{1},p_{2}<\infty ,0<q\leq \infty ,\alpha _{1}>-n\ $\textit{%
and }$\alpha _{2}>-n$. \textit{We suppose that }%
\begin{equation*}
s_{1}-\frac{n+\alpha _{1}}{p_{1}}\leq s_{2}-\frac{n+\alpha _{2}}{p_{2}}.
\end{equation*}

\textit{Let }$0<p_{2}\leq p_{1}<\infty $ and $\frac{\alpha _{2}}{p_{2}}\geq 
\frac{\alpha _{1}}{p_{1}}$. Then%
\begin{equation*}
B_{p_{2},q}^{s_{2}}(\mathbb{R}^{n},|\cdot |^{\alpha _{2}})\hookrightarrow
B_{p_{1},q}^{s_{1}}(\mathbb{R}^{n},|\cdot |^{\alpha _{1}}).
\end{equation*}
\end{theorem}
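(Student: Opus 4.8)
The plan is to reduce the embedding to a single-block inequality and then to a weighted Nikolskii-type estimate. Since the fine index $q$ is the same on both sides, it suffices to prove that the dyadic pieces $g_j := \mathcal{F}^{-1}\varphi_j \ast f$ obey a uniform (in $j \in \mathbb{N}_0$) bound
\[
2^{j s_1}\big\|g_j\big\|_{L^{p_1}(\mathbb{R}^n,|\cdot|^{\alpha_1})} \le C\, 2^{j s_2}\big\|g_j\big\|_{L^{p_2}(\mathbb{R}^n,|\cdot|^{\alpha_2})},
\]
with $C$ independent of $j$ and $f$. Raising to the power $q$ and summing over $j$ (taking the supremum when $q=\infty$) then gives $\|f\|_{B_{p_1,q}^{s_1}(\mathbb{R}^n,|\cdot|^{\alpha_1})} \le C\|f\|_{B_{p_2,q}^{s_2}(\mathbb{R}^n,|\cdot|^{\alpha_2})}$, which is the asserted embedding. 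The crucial structural fact here is that $\mathcal{F}g_j$ is supported in the annulus $\{2^{j-1}\le |\xi|\le 3\cdot 2^{j-1}\}$, so each $g_j$ is band-limited at frequency scale $R := 2^j \ge 1$.

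The smoothness hypothesis is what turns the single-block inequality into a purely Lebesgue-space statement. Writing $\delta := \frac{n+\alpha_2}{p_2}-\frac{n+\alpha_1}{p_1}$, the assumption $s_1 - \frac{n+\alpha_1}{p_1} \le s_2 - \frac{n+\alpha_2}{p_2}$ reads $\delta \le s_2 - s_1$; since $p_2\le p_1$ and $\frac{\alpha_2}{p_2}\ge\frac{\alpha_1}{p_1}$ one also has $\delta\ge 0$, whence $2^{j\delta} \le 2^{j(s_2-s_1)}$ for every $j \ge 0$. Consequently the block inequality follows once I establish the weighted Plancherel--P\'olya--Nikolskii inequality
\[
\big\|h\big\|_{L^{p_1}(\mathbb{R}^n,|\cdot|^{\alpha_1})} \le C\, R^{\,\delta}\,\big\|h\big\|_{L^{p_2}(\mathbb{R}^n,|\cdot|^{\alpha_2})}
\]
for all $h \in \mathcal{S}'(\mathbb{R}^n)$ with $\mathcal{F}h$ supported in $\{|\xi|\le R\}$, uniformly in $R\ge 1$ and $h$, under the standing assumptions $0<p_2\le p_1<\infty$, $\alpha_1,\alpha_2>-n$ and $\frac{\alpha_2}{p_2}\ge\frac{\alpha_1}{p_1}$.

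To prove this inequality I would use the sub-mean-value property of band-limited functions. Fixing $\Phi \in \mathcal{S}(\mathbb{R}^n)$ with $\mathcal{F}\Phi = 1$ on $\{|\xi|\le 1\}$, the reproducing identity $h = h \ast \Phi_R$ with $\Phi_R := R^n\Phi(R\,\cdot)$ holds, and a standard Peetre-maximal-function argument yields, for any fixed $0<r\le p_2$ and any large $M$, the pointwise majorant
\[
|h(x)| \le C\Big(R^n\!\int_{\mathbb{R}^n}\frac{|h(y)|^r}{(1+R|x-y|)^{M}}\,dy\Big)^{1/r},\qquad x\in\mathbb{R}^n.
\]
Inserting this into the $L^{p_1}(|\cdot|^{\alpha_1})$ norm, raising to the power $p_1$ and integrating against $|x|^{\alpha_1}$ reduces matters to estimating a weighted convolution whose kernel concentrates at scale $R^{-1}$; choosing $r$ small enough relative to $p_2/p_1$ and $M$ large reinstates the $L^{p_2}$ integrability, and the factor $R^{\delta}$ emerges from the scaling of the kernel together with the homogeneity of the weights.

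The main obstacle lies precisely in the weight bookkeeping of this last step. The power weights $|x|^{\alpha_i}$ are only assumed to satisfy $\alpha_i>-n$, with no upper bound, so they need not lie in any Muckenhoupt class $A_{p}$ and the Hardy--Littlewood maximal operator need not be bounded on $L^{p_i}(\mathbb{R}^n,|\cdot|^{\alpha_i})$; one therefore cannot simply invoke weighted maximal inequalities. Instead I would split the convolution integral according to whether $|x|$ dominates $|x-y|$ or not and exploit the exact homogeneity of $|x|^{\alpha}$, so as to control the region near the origin, where one weight blows up and the other vanishes. It is here that the hypothesis $\frac{\alpha_2}{p_2}\ge\frac{\alpha_1}{p_1}$ is indispensable: it guarantees that passing from the target weight $|x|^{\alpha_1/p_1}$ to the source weight $|x|^{\alpha_2/p_2}$ never increases the weight in the relevant scale-invariant sense, which keeps the near-origin contribution finite and produces the claimed power $R^{\delta}$. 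This analysis is the content drawn from \cite[Theorem 5.9]{Drihem2013a} and \cite{MM12}.
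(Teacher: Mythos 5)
The paper does not actually prove this theorem: it is quoted verbatim from \cite[Theorem 5.9]{Drihem2013a} and \cite{MM12}, so there is no in-paper argument to compare against. Your reduction is the standard route taken in those references, and the part of it that you carry out in full is correct: since $q$ is common to both sides, the embedding follows from the single-block estimate, which in turn follows from the weighted Nikolskii inequality with exponent $\delta=\frac{n+\alpha_2}{p_2}-\frac{n+\alpha_1}{p_1}$, and your verification that $\delta\ge 0$ and $\delta\le s_2-s_1$ under the stated hypotheses is right (a rescaling test $h=\phi(R\cdot)$ confirms that $R^{\delta}$ is the sharp power). What you do not actually prove is the weighted Nikolskii inequality itself, which is where the entire content of the theorem sits; you sketch the Peetre-maximal-function majorant and then describe, rather than execute, the weight bookkeeping, ultimately deferring it to the same two references the paper cites. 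That is acceptable as a reduction-plus-citation, but one heuristic in your sketch is misplaced: the hypothesis $\frac{\alpha_2}{p_2}\ge\frac{\alpha_1}{p_1}$ is not what controls the near-origin contribution (there the scaling $R^{\delta}$ matches exactly, and $\alpha_i>-n$ gives local integrability); it is needed in the region $|x|\gtrsim 1$, as one sees by testing with a band-limited bump concentrated near a point $x_0$ with $|x_0|=T\to\infty$, which forces $T^{\alpha_1/p_1-\alpha_2/p_2}\lesssim 1$. If you were to write out the convolution estimate in full, that is the case split where the hypothesis must be invoked.
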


We present some characterization of the above spaces in terms of ball mains
of differences. Let $f$ be an arbitrary function on $\mathbb{R}^{n}$ and $%
x,h\in \mathbb{R}^{n}$. Then%
\begin{equation*}
\Delta _{h}f(x)=f(x+h)-f(x),\quad \Delta _{h}^{M+1}f(x)=\Delta _{h}(\Delta
_{h}^{M}f)(x),\quad M\in \mathbb{N}.
\end{equation*}%
These are the well-known differences of functions which play an important
role in the theory of function spaces. Using mathematical induction one can
show the explicit formula%
\begin{equation*}
\Delta _{h}^{M}f(x)=\sum_{j=0}^{M}\left( -1\right) ^{j}C_{M}^{j}f(x+(M-j)h),
\end{equation*}%
where $C_{M}^{j}$ are the binomial coefficients. By ball means of
differences we mean the quantity%
\begin{equation*}
d_{t}^{M}f(x)=t^{-n}\int_{|h|<t}\left\vert \Delta _{h}^{M}f(x)\right\vert
dh=\int_{B}\left\vert \Delta _{th}^{M}f(x)\right\vert dh,
\end{equation*}%
where $B=\{y\in \mathbb{R}^{n}:|h|\leq 1\}$ is the unit ball of $\mathbb{R}%
^{n}$, $t>0$ is a real number and $M$ is a natural number. Let%
\begin{equation}
\big\|f\big\|_{B_{p,q}^{s}(\mathbb{R}^{n},|\cdot |^{\alpha })}^{\ast }=\big\|%
f\big\|_{L^{p}(\mathbb{R}^{n},|\cdot |^{\alpha })}+\Big(\int_{0}^{1}t^{-sq}%
\big\|d_{t}^{M}f\big\|_{L^{p}(\mathbb{R}^{n},|\cdot |^{\alpha })}^{q}\frac{dt%
}{t}\Big)^{\frac{1}{q}}  \label{norm1}
\end{equation}%
The following theorems play a central role in our paper, see \cite{Drihem20}.

\begin{theorem}
\label{means-diff-cha1}\textit{Let }$0<p<\infty ,0<q\leq \infty ,\alpha >-n$%
, $\alpha _{0}=n-\frac{n}{p}\ $and $M\in \mathbb{N}$. Assume that%
\begin{equation*}
\max \Big(\sigma _{p},\frac{\alpha }{p}-\alpha _{0}\Big)<s<M,\quad \sigma
_{p}=\max \Big(0,\frac{n}{p}-n\Big).
\end{equation*}%
Then $\big\|\cdot \big\|_{B_{p,q}^{s}(\mathbb{R}^{n},|\cdot |^{\alpha
})}^{\ast }$ is equivalent quasi-norm on $B_{p,q}^{s}(\mathbb{R}^{n},|\cdot
|^{\alpha })$.
\end{theorem}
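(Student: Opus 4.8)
The plan is to prove Theorem~\ref{means-diff-cha1}, the equivalence of $\|\cdot\|_{B_{p,q}^{s}(\mathbb{R}^{n},|\cdot|^{\alpha})}^{\ast}$ with the Fourier-analytic quasi-norm from Definition~\ref{def-herz-Besov}, by establishing the two inequalities separately. Throughout I would work with the Littlewood--Paley pieces $f_{j}=\mathcal{F}^{-1}\varphi_{j}\ast f$, and the key technical ingredient is a weighted maximal-function estimate: because the weight $|x|^{\alpha}$ is a Muckenhoupt $A_{\infty}$ weight for $\alpha>-n$ (more precisely it lies in the relevant Muckenhoupt class once $\alpha$ is bounded above, which is guaranteed by the hypothesis $\frac{\alpha}{p}-\alpha_{0}<s$), the Peetre maximal function $(\varphi_{j}^{\ast}f)_{a}(x)=\sup_{y}\frac{|f_{j}(x-y)|}{(1+2^{j}|y|)^{a}}$ satisfies $\|(\varphi_{j}^{\ast}f)_{a}\|_{L^{p}(\mathbb{R}^{n},|\cdot|^{\alpha})}\lesssim\|f_{j}\|_{L^{p}(\mathbb{R}^{n},|\cdot|^{\alpha})}$ for $a$ large enough. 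This is the weighted analogue of the classical Peetre--Fefferman--Stein machinery and is where the structure of power weights enters.

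For the estimate $\|f\|^{\ast}\lesssim\|f\|_{B}$ I would start from the pointwise bound on the ball mean $d_{t}^{M}f$. Splitting $f=\sum_{j}f_{j}$, I would estimate $d_{t}^{M}f_{j}$ in two regimes: for the high-frequency terms $2^{j}t\ge 1$ one uses $\|\Delta_{h}^{M}f_{j}\|\lesssim\|f_{j}\|$ crudely (bounded differences), while for the low-frequency terms $2^{j}t<1$ one exploits the cancellation of the $M$-th difference together with the band-limited nature of $f_{j}$ to gain a factor $(2^{j}t)^{M}$ via a Taylor/moment argument. Both estimates are made $L^{p}(|\cdot|^{\alpha})$-valid through the Peetre maximal function above. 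Combining the two regimes and summing over $j$ after inserting the factor $t^{-s}$ and integrating $\int_{0}^{1}\frac{dt}{t}$ yields convergence precisely because $s<M$ controls the high-frequency sum and $s>\max(\sigma_{p},\frac{\alpha}{p}-\alpha_{0})$ controls the low-frequency sum.

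The reverse estimate $\|f\|_{B}\lesssim\|f\|^{\ast}$ is the more delicate direction and is where I expect the main obstacle to lie. The idea is to reconstruct each $f_{j}$ from differences by choosing an auxiliary kernel (a Calder\'on-type reproducing formula, or equivalently a function $\Phi$ whose Fourier transform has a zero of order $M$ at the origin and which is adapted to the dyadic annulus) so that $f_{j}$ can be written as an average of $M$-th differences $\Delta_{h}^{M}f$ over $|h|\sim 2^{-j}$. Controlling this reconstruction in the weighted norm again requires the weighted Peetre maximal estimate, and the lower restriction on $s$ is exactly what guarantees that the contribution of the lowest frequency block $f_{0}$ (which is not reached by differences and must be handled by the $L^{p}(|\cdot|^{\alpha})$ term $\|f\|_{L^{p}(\mathbb{R}^{n},|\cdot|^{\alpha})}$) can be absorbed.

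The hard part will be verifying that the weight $|x|^{\alpha}$ interacts correctly with the Peetre maximal function across all scales $j$, since near the origin the weight is singular (for $\alpha<0$) or degenerate (for $\alpha>0$) and a naive application of the unweighted estimate fails; this is precisely the role of the lower bound $s>\frac{\alpha}{p}-\alpha_{0}=\frac{\alpha}{p}-n+\frac{n}{p}$, which encodes the admissible relationship between the smoothness $s$, the integrability $p$, and the weight exponent $\alpha$ needed for the maximal inequality to hold. Once that weighted maximal estimate is in hand, the remaining steps are the standard, if lengthy, summations.
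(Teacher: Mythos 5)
The paper does not prove Theorem~\ref{means-diff-cha1} at all: it is quoted from the reference \cite{Drihem20}, so there is no internal proof to compare against. Judged on its own, your proposal is a reasonable outline of the standard route (Peetre maximal functions, high/low frequency splitting with the $(2^{j}t)^{M}$ gain from cancellation, and a Calder\'on-type reproducing formula for the converse), but as written it has a genuine gap in the justification of its central lemma. You assert that the weighted Peetre maximal estimate holds because $|x|^{\alpha}$ ``lies in the relevant Muckenhoupt class once $\alpha$ is bounded above, which is guaranteed by the hypothesis $\frac{\alpha}{p}-\alpha_{0}<s$.'' This is not right: the hypothesis $s>\frac{\alpha}{p}-\alpha_{0}=\frac{n+\alpha}{p}-n$ places no upper bound on $\alpha$ (one may take $\alpha$ arbitrarily large and then $s$, $M$ large), and membership of $|x|^{\alpha}$ in $A_{u}$ classes is a condition on $\alpha$ alone, independent of $s$. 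If by ``relevant class'' you mean $A_{p}$ (which is what a naive maximal-function argument needs), that requires $\alpha<n(p-1)$, which is not assumed and fails in the admitted range; if you mean $A_{\infty}$, which does suffice for the Peetre maximal inequality after the usual $r$-trick with $r$ small, then no upper bound on $\alpha$ is needed and the parenthetical is vacuous. Either way, the stated mechanism by which the hypothesis on $s$ enters the maximal estimate is wrong.

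The actual role of the lower bound $s>\max(\sigma_{p},\frac{n+\alpha}{p}-n)$ is different from what you describe: it is the condition ensuring $B_{p,q}^{s}(\mathbb{R}^{n},|\cdot|^{\alpha})\hookrightarrow L_{\mathrm{loc}}^{1}$ (the paper points this out right after Lemma~\ref{Si-funct1}), so that $d_{t}^{M}f$ is even well defined pointwise, and it is what controls the lack of translation invariance of the weight: in estimating $\|\Delta_{h}^{M}f_{j}\|_{L^{p}(|\cdot|^{\alpha})}$ one must compare $|x+kh|^{\alpha}$ with $|x|^{\alpha}$ on the annuli $|x|\lesssim|h|$ near the origin, and it is in summing those contributions over the dyadic annuli $C_{k}$ (via the discretization \eqref{discreteversion}) against the dyadic frequencies $2^{j}$ that the exponent $\frac{n+\alpha}{p}-n$ appears. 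Your sketch acknowledges that this interaction is ``the hard part'' but does not supply the argument, so the proposal as it stands is an announcement of the standard strategy rather than a proof; to complete it you would need to carry out the annulus-by-annulus estimates near the origin explicitly, which is precisely the content of the cited proof in \cite{Drihem20}.
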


\begin{remark}
The integral $\int_{0}^{1}\cdot \cdot \cdot \frac{dt}{t}$ in \eqref{norm1}
can be replaced by $\int_{0}^{\varepsilon }\cdot \cdot \cdot \frac{dt}{t}$, $%
0<\varepsilon <1$ in the sense of equivalent quasi-norms.
\end{remark}

\section{Proof of the main results}

In this section we prove our statements. Our proofs use partially some
decomposition techniques already used in \cite{Bourdaud04}\ and \cite%
{Runst-Sickel1996} where the Besov case was studied. The first result based
on the following Lemma, see \cite{Dr-Nim}.

\begin{lemma}
\label{Si-funct1}Let $0<p<\infty ,0<q\leq \infty ,\alpha >-n$, $\alpha
_{0}=n-\frac{n}{p}\ $and $s>\max (\sigma _{p},\frac{\alpha }{p}-\alpha _{0})$%
. We put%
\begin{equation}
f_{\mu ,\delta }(x)=\theta (x)|x|^{\mu }(-\log |x|)^{-\delta },
\label{f-alpha-delta}
\end{equation}%
where $\mu ^{2}+\delta ^{2}>0,\delta \geq 0$, $\mu \neq 0$ and $\theta $ is
a smooth cut-off function with $\mathrm{supp}\theta \subset \{x:|x|\leq
\vartheta \}$, $\vartheta >0$ sufficiently small.$\newline
\mathrm{(i)}$ Let $\delta >0$. Then $f_{\mu ,\delta }\in B_{p,q}^{s}(\mathbb{%
R}^{n},|\cdot |^{\alpha })$ if and only if $s<\frac{n+\alpha }{p}+\mu $ or $%
s=\frac{n+\alpha }{p}+\mu $ and $q\delta >1$.$\newline
\mathrm{(ii)}$ We have $f_{\mu ,0}\in B_{p,q}^{s}(\mathbb{R}^{n},|\cdot
|^{\alpha })$ if and only if $s<\frac{n+\alpha }{p}+\mu $ or $s=\frac{%
n+\alpha }{p}+\mu $ and $q=\infty $.
\end{lemma}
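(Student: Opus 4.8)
The plan is to read membership off the ball-means-of-differences norm of Theorem~\ref{means-diff-cha1}. Fix an integer $M$ with $M>\max\big(s,\mu+\tfrac{n+\alpha}{p}\big)$; since $s>\max(\sigma_p,\tfrac{\alpha}{p}-\alpha_0)$ by hypothesis, Theorem~\ref{means-diff-cha1} together with the remark following it reduces membership to the finiteness of
\[
\|f_{\mu,\delta}\|_{L^{p}(\mathbb{R}^{n},|\cdot|^{\alpha})}+\Big(\int_{0}^{\varepsilon}t^{-sq}\,\|d_{t}^{M}f_{\mu,\delta}\|_{L^{p}(\mathbb{R}^{n},|\cdot|^{\alpha})}^{q}\,\tfrac{dt}{t}\Big)^{1/q}.
\]
Passing to polar coordinates, $\|f_{\mu,\delta}\|_{L^{p}(\mathbb{R}^{n},|\cdot|^{\alpha})}<\infty$ exactly when $\mu>-\tfrac{n+\alpha}{p}$, a condition implied by (and strictly weaker than) the claimed membership condition because $s>0$. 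Hence the whole question is decided by the integral term, and in the complementary range the norm is infinite either through this term or, when $f_{\mu,\delta}\notin L^{p}$, directly.

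The core of the argument is the two-sided estimate
\[
\|d_{t}^{M}f_{\mu,\delta}\|_{L^{p}(\mathbb{R}^{n},|\cdot|^{\alpha})}\approx t^{\mu+\frac{n+\alpha}{p}}(-\log t)^{-\delta},\qquad 0<t<\varepsilon\qquad(\ast).
\]
Granting $(\ast)$, the integral term becomes, up to constants, $\int_{0}^{\varepsilon}t^{(\mu+\frac{n+\alpha}{p}-s)q}(-\log t)^{-\delta q}\,\tfrac{dt}{t}$. The substitution $u=-\log t$ turns this into $\int^{\infty}u^{-\delta q}e^{-(\mu+\frac{n+\alpha}{p}-s)qu}\,du$, which is finite precisely when $s<\mu+\tfrac{n+\alpha}{p}$, or when $s=\mu+\tfrac{n+\alpha}{p}$ and $\delta q>1$. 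For $q=\infty$ the corresponding supremum $\sup_{t}t^{\mu+\frac{n+\alpha}{p}-s}(-\log t)^{-\delta}$ is finite exactly when $s\le\mu+\tfrac{n+\alpha}{p}$. Specializing to $\delta>0$ and to $\delta=0$ reproduces the dichotomies of parts (i) and (ii) respectively.

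For the upper bound in $(\ast)$ I would split $\mathbb{R}^{n}$ into the inner set $\{|x|\le Kt\}$ and the outer set $\{Kt<|x|\le\vartheta+Mt\}$ for a fixed large $K$, abbreviating $g(x)=|x|^{\mu}(-\log|x|)^{-\delta}$. On the outer set all sample points $x+(M-j)th$ stay comparable to $x$, so the finite-difference Taylor bound $|\Delta_{th}^{M}g(x)|\lesssim t^{M}\sup_{|y-x|\le Mt}|D^{M}g(y)|\lesssim t^{M}|x|^{\mu-M}(-\log|x|)^{-\delta}$ applies; since $M>\mu+\tfrac{n+\alpha}{p}$ the exponent $(\mu-M)p+\alpha+n$ is negative, so the weighted radial integral is controlled by its lower endpoint $r\sim t$ and yields $t^{\mu+\frac{n+\alpha}{p}}(-\log t)^{-\delta}$. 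On the inner set one bounds $d_{t}^{M}g(x)$ crudely by $g(x)$ plus averages of $g$ over balls of radius $\lesssim t$; here $\mu+\tfrac{n+\alpha}{p}>0$ makes the weighted integral converge at the origin with the same order. The smooth cutoff $\theta$ and the terms coming from differentiating the logarithm are of lower order and are absorbed through the discrete Leibniz rule for differences.

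The main obstacle is the matching lower bound in $(\ast)$: one must show that the $M$-th differences do not cancel. I would localize to the annulus $|x|\sim t$, write $x=r\omega$ with $|\omega|=1$, and take the increment in the radial direction, so that $\Delta_{t\omega}^{M}g(x)$ equals the $M$-th forward difference of step $t$ of the one-variable profile $\phi(\rho)=\rho^{\mu}(-\log\rho)^{-\delta}$ at $\rho=r\sim t$. Because $\phi$ is genuinely non-polynomial — this is where $\delta>0$, or $\delta=0$ with $\mu$ not an even integer, enters — one has $|\phi^{(M)}(\rho)|\approx\rho^{\mu-M}(-\log\rho)^{-\delta}$, and comparing $\Delta_{t}^{M}\phi$ with $t^{M}\phi^{(M)}$ via the integral form of the remainder gives $|\Delta_{t\omega}^{M}g(x)|\gtrsim t^{\mu}(-\log t)^{-\delta}$ on a subset of the annulus of measure $\approx t^{n}$ and for increments in a set of positive measure. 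Since $|x|^{\alpha}\approx t^{\alpha}$ there, integrating produces $\|d_{t}^{M}f_{\mu,\delta}\|_{L^{p}(\mathbb{R}^{n},|\cdot|^{\alpha})}\gtrsim t^{\mu+\frac{n+\alpha}{p}}(-\log t)^{-\delta}$, which closes $(\ast)$. The delicate points are making this non-cancellation quantitative uniformly as $t\to0$ and tracking the logarithmic factor under differencing.
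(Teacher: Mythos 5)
First, a point of reference: the paper does not actually prove Lemma \ref{Si-funct1} --- it is quoted from the preprint \cite{Dr-Nim} --- so your proposal can only be measured against the statement itself and against the closely related argument the paper gives for Proposition \ref{Key-1}. Your reduction to the ball-means characterization of Theorem \ref{means-diff-cha1}, the use of the discretization \eqref{discreteversion}, the splitting of $\mathbb{R}^n$ at $|x|\sim t$, and the final computation of $\int_0^\varepsilon t^{(\mu+\frac{n+\alpha}{p}-s)q}(-\log t)^{-\delta q}\,\frac{dt}{t}$ are exactly in the spirit of the paper's Substeps 1.1--1.2, and the upper bound in your two-sided estimate $(\ast)$ is believable along those lines. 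Two repairable technical omissions there: for $0<p<1$ Jensen's inequality for the average $d_t^Mf(x)$ goes the wrong way, and when $\mu p+n\le 0$ (which the hypotheses allow for $p>1$) the inner-region integral of $|f|^p$ diverges and you need the H\"older trick with the auxiliary exponent $\tau$ that the paper uses in \eqref{est-H1}.

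The genuine gap is in the lower bound, which carries the entire ``only if'' direction. You derive it from $|\phi^{(M)}(\rho)|\approx\rho^{\mu-M}(-\log\rho)^{-\delta}$ for the one-sided radial profile $\phi(\rho)=\rho^{\mu}(-\log\rho)^{-\delta}$, comparing $\Delta_t^M\phi$ with $t^M\phi^{(M)}$ on the annulus $|x|\sim t$. That estimate is false whenever $\mu$ is a positive integer less than $M$: for $\delta=0$ one has $\phi^{(M)}\equiv 0$ (the one-sided profile is a polynomial), and for $\delta>0$ the coefficient $\mu(\mu-1)\cdots(\mu-M+1)$ of the leading term vanishes, so in fact $\phi^{(M)}(\rho)\approx\rho^{\mu-M}(-\log\rho)^{-\delta-1}$, one logarithm worse; your mechanism then yields either nothing or a bound that misses the critical line $s=\mu+\frac{n+\alpha}{p}$ by a logarithm. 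For odd integer $\mu$ with $\delta=0$ (e.g.\ $\mu=1$, $M=2$, the function $\theta(x)|x|$) the singularity of $|x|^{\mu}$ lives only at the origin, and the non-cancellation has to be extracted from differences whose sample points straddle $x=0$, where the relevant profile is $|\rho|^{\mu}$ on all of $\mathbb{R}$ rather than $\rho^{\mu}$ on $(0,\infty)$; your outward radial differences on $|x|\sim t$ literally vanish there. Your parenthetical that the argument needs ``$\mu$ not an even integer'' when $\delta=0$ is actually a correct diagnosis that the \emph{statement} needs this caveat (indeed $\theta(x)|x|^{2k}$ is smooth, so part (ii) cannot hold as written for even $\mu$), but the lemma does not exclude odd integers, nor integers in part (i), and for those your lower bound as described does not close.
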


For any $u>0$, $k\in \mathbb{Z}$ we set $C(u)=\{x\in \mathbb{R}^{n}:\frac{u}{%
2}\leq \left\vert x\right\vert <u\}$ and $C_{k}=C(2^{k})$. For $x\in \mathbb{%
R}^{n}$ and $r>0$ we denote by $B(x,r)$ the open ball in $\mathbb{R}^{n}$
with center $x$ and radius $r$. Let $\chi _{k}$, for $k\in \mathbb{Z}$,
denote the characteristic function of the set $C_{k}$. Before state the
proof of this lemma we mention that%
\begin{equation}
\big\|g\big\|_{L^{p}(\mathbb{R}^{n},|\cdot |^{\alpha })}\approx \Big(%
\sum\limits_{k=-\infty }^{\infty }2^{k\alpha }\big\|g\chi _{k}\big\|_{p}^{p}%
\Big)^{\frac{1}{p}}  \label{discreteversion}
\end{equation}%
for any $g\in L^{p}(\mathbb{R}^{n},|\cdot |^{\alpha })$, $\alpha \in \mathbb{%
R}$ and $0<p<\infty $. This discrete version play a central role in our
paper.

Under the hypotheses of Lemma \ref{Si-funct1}\textbf{\ }we have 
\begin{equation*}
B_{p,q}^{s}(\mathbb{R}^{n},|\cdot |^{\alpha })\hookrightarrow L_{loc}^{1}
\end{equation*}%
see \cite[Theorem 4]{Drihem20}. Then the restriction $s>\max (\sigma _{p},%
\frac{\alpha }{p}-\alpha _{0})$ is natural. Here we interpret $L_{loc}^{1}$
as the set of regular distributions. If $\alpha =0$, then Lemma \ref%
{Si-funct1} reduces to the result of \cite[Lemma 2.3.1/1]{Runst-Sickel1996}.
We do not deal with the case $\mu =0$, however for the classical Besov
spaces is given in \cite[Lemma 2.3.1/2]{Runst-Sickel1996}.

\begin{proof}[\textbf{Proof of Theorem \protect\ref{Result1}}]
Let $\theta $ be as in Lemma \ref{Si-funct1} and $f(x)=x_{1}\theta
(|x|),x\in \mathbb{R}^{n}$ which belong to $\mathcal{D}(\mathbb{R}%
^{n})\subset B_{p,q}^{\frac{n+\alpha }{p}+\mu }(\mathbb{R}^{n},|\cdot
|^{\alpha })$. Proceed in two cases.

\textbf{Case 1. }$p>1$. Let $\alpha _{1}<\alpha $ be such that $0<\frac{%
n+\alpha _{1}}{p}+\mu <1$. We claim that $G_{\mu }(f)\notin B_{p,q}^{\frac{%
n+\alpha _{1}}{p}+\mu }(\mathbb{R}^{n},|\cdot |^{\alpha _{1}})$, which
implies that $G_{\mu }(f)$ does not belong to $B_{p,q}^{\frac{n+\alpha }{p}%
+\mu }(\mathbb{R}^{n},|\cdot |^{\alpha })$, see Theorem \ref{embeddings5}.
Let us prove our claim. By $D(t)$ we denote the set 
\begin{equation*}
\Big\{x\in \mathbb{R}^{n}:\left\vert x\right\vert <\frac{t}{2H}\Big\},\quad
H>0,
\end{equation*}%
with $0<t<\varepsilon $ and $\varepsilon $ sufficiently small . Let $j\in 
\mathbb{Z}$ be such that $2^{j-1}\leq \frac{t}{2H}<2^{j}$. It is easily seen
that%
\begin{align*}
\sum\limits_{k=-\infty }^{\infty }2^{k\alpha }\big\|(d_{t}^{1}(G_{\mu
}(f)))\chi _{C_{k}}\big\|_{p}^{p}\geq & 2^{(i-1)\alpha }\big\|%
(d_{t}^{1}(G_{\mu }(f)))\chi _{D(t)\cap C_{j-1}}\big\|_{p}^{p} \\
\geq & ct^{\alpha }\big\|(d_{t}^{1}(G_{\mu }(f)))\chi _{D(t)\cap C_{j-1}}%
\big\|_{p}^{p},
\end{align*}%
where $c$ is independent of $t$ and $j$. Let $x\in D(t)\cap C_{j-1}$ and 
\begin{equation*}
C=\Big\{h:|h_{i}|\leq \frac{t}{\sqrt{n}},\quad \frac{3t}{H}\leq |h_{1}|\leq 
\frac{t}{\sqrt{n}},i=2,...,n\Big\}.
\end{equation*}%
By the inequality (27) in \cite[2.3.1, p. 45]{Runst-Sickel1996}, we obtain 
\begin{equation*}
d_{t}^{1}(G_{\mu }(f))(x)\geq t^{-n}\int_{C}|\Delta _{h}^{1}(G_{\mu
}(f))(x)|dh\geq ct^{\mu },
\end{equation*}%
which yields $G_{\mu }(f)$ does not belong to $B_{p,q}^{\frac{n+\alpha }{p}%
+\mu }(\mathbb{R}^{n},|\cdot |^{\alpha })$.

\textbf{Case 2.} $0<p\leq 1$. Let $p\leq 1<p_{1}$. The Sobolev embeddings
given in Theorem \ref{embeddings5} leads to 
\begin{equation*}
B_{p,v}^{\frac{n+\alpha }{p}+\mu }(\mathbb{R}^{n},|\cdot |^{\alpha
})\hookrightarrow B_{p_{1},v}^{\frac{n+\alpha }{p_{1}}+\mu }(\mathbb{R}%
^{n},|\cdot |^{\alpha }),\quad 0<\nu \leq \infty .
\end{equation*}%
Case 1\ yields the desired result. This completes the proof.
\end{proof}

\begin{remark}
We mention that we used only the function $f_{\mu ,0}$ of Lemma \ref%
{Si-funct1}, but $f_{\mu ,\delta }$, $\delta >0$ play an import role in the
problem of mapping properties of Nemytzkij operators in the classical Besov
and Triebel-Lizorkin spaces, see \cite{Runst-Sickel1996}.
\end{remark}

Before we prove the second result of this paper, we need another lemma. Let $%
\varrho $ be a $C^{\infty }$ function on $\mathbb{R}$ such that $\varrho
(x)=1$ for $x\leq e^{-3}$ and $\varrho (x)=0$ for $x\geq e^{-2}$. Let $%
(\lambda ,\sigma )\in \mathbb{R}^{2}$ and%
\begin{equation}
f_{\lambda ,\sigma }(x)=|\log |x||^{\lambda }|\log |\log |x|||^{-\sigma
}\varrho (|x|).  \label{triebel-function}
\end{equation}%
As in \cite{Bourdaud04} let $U_{q}$ be the set of $(\lambda ,\sigma )\in 
\mathbb{R}^{2}$ such that:

\textbullet\ $\lambda =1-\frac{1}{q}$ and $\sigma >\frac{1}{q}$, or $\lambda
<1-\frac{1}{q}$, in case $1<q<\infty $,

\textbullet\ $\lambda =0$ and $\sigma >0$, or $\lambda <0$, in case $q=1$,

\textbullet\ $\lambda =1$ and $\sigma \geq 0$, or $\lambda <1$, in case $%
q=\infty .$

\begin{lemma}
\label{Bourdaud-Triebel}Let $(\lambda ,\sigma )\in \mathbb{R}^{2},1\leq
p<\infty ,1\leq q\leq \infty ,\alpha >-n$ and 
\begin{equation}
(\lambda ,\sigma )\in U_{q}.  \label{condition1}
\end{equation}%
Let $f_{\lambda ,\sigma }$ be the function defined by %
\eqref{triebel-function}\textrm{. }We have $f_{\lambda ,\sigma }\in B_{p,q}^{%
\frac{n+\alpha }{p}}(\mathbb{R}^{n},|\cdot |^{\alpha })$. In the case $%
\alpha \geq 0$, the condition \eqref{condition1} becomes necessary.
\end{lemma}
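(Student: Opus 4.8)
The plan is to establish the lemma in two parts, corresponding to its two assertions: (a) that $(\lambda,\sigma)\in U_q$ is sufficient for $f_{\lambda,\sigma}\in B_{p,q}^{(n+\alpha)/p}(\mathbb{R}^n,|\cdot|^\alpha)$, and (b) that when $\alpha\ge 0$ this condition is also necessary. For both directions I would work with the difference characterization from Theorem \ref{means-diff-cha1}, since $f_{\lambda,\sigma}$ is not a power-type function and its smoothness is governed by logarithmic factors rather than algebraic ones. Note that the smoothness index here is exactly the critical value $s=\frac{n+\alpha}{p}$, which by \eqref{assumptions} is strictly positive, so I can pick $M\in\mathbb{N}$ with $\frac{n+\alpha}{p}<M$ and use the equivalent quasi-norm $\|\cdot\|^*$ given by \eqref{norm1}. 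The $L^p(\mathbb{R}^n,|\cdot|^\alpha)$ term is harmless: since $f_{\lambda,\sigma}$ is bounded and compactly supported near the origin with a tame logarithmic singularity, finiteness of $\|f_{\lambda,\sigma}\|_{L^p(\mathbb{R}^n,|\cdot|^\alpha)}$ reduces to integrability of $|\log|x||^{p\lambda}|\log|\log|x|||^{-p\sigma}|x|^\alpha$ near $0$, which holds because $\alpha>-n$.

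For the sufficiency direction, the core is to estimate the Gagliardo-type integral $\int_0^1 t^{-sq}\|d_t^M f_{\lambda,\sigma}\|_{L^p(\mathbb{R}^n,|\cdot|^\alpha)}^q\,\frac{dt}{t}$ with $s=\frac{n+\alpha}{p}$. Here I would exploit the discrete representation \eqref{discreteversion}, writing the weighted $L^p$ norm as $\big(\sum_k 2^{k\alpha}\|\,\cdot\,\chi_{C_k}\|_p^p\big)^{1/p}$, which localizes the computation to dyadic annuli $C_k=C(2^k)$ around the origin. On each annulus one controls $d_t^M f_{\lambda,\sigma}(x)$ by Taylor-expanding and bounding the $M$-th difference: away from the singularity the function is smooth, so the high-order difference is small, while near it the logarithmic derivatives produce the decisive factor. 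The upshot is that the relevant quantity behaves, up to the weight, like the $\ell^q$ summability over the scales $k$ of $|\log 2^k|^\lambda|\log|\log 2^k||^{-\sigma}$-type terms, i.e. the series $\sum_k k^{q\lambda-q}(\log k)^{-q\sigma}$ (and its $q=1,\infty$ analogues). The conditions defining $U_q$ are precisely the convergence conditions for this borderline series: $\lambda<1-\tfrac1q$ gives convergence outright, while the endpoint $\lambda=1-\tfrac1q$ needs the extra logarithmic gain $\sigma>\tfrac1q$ (with the obvious modifications when $q=1$ or $q=\infty$). I anticipate this reduction—showing the weighted difference integral is comparable to a pure logarithmic series independent of $p$ and $\alpha$—to be the main technical obstacle, since the weight $|x|^\alpha$ must be tracked through the difference operator and the cancellation structure of $\Delta_h^M$.

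For the necessity direction under $\alpha\ge 0$, I would argue contrapositively: if $(\lambda,\sigma)\notin U_q$, then the logarithmic series above diverges, and I must show this divergence propagates to a lower bound on $\|f_{\lambda,\sigma}\|_{B_{p,q}^{(n+\alpha)/p}(\mathbb{R}^n,|\cdot|^\alpha)}^*$. The hypothesis $\alpha\ge 0$ is what makes this work: the weight $|x|^\alpha$ is then bounded below by a positive constant away from the origin on each fixed annulus, so one can produce a matching lower bound for $\|d_t^M f_{\lambda,\sigma}\|_{L^p(\mathbb{R}^n,|\cdot|^\alpha)}$ rather than merely an upper one. Concretely, I would fix a set of $x$ and increments $h$ on which the $M$-th difference of the logarithmic profile is bounded from below by the natural logarithmic factor at scale $t$, use $|x|^\alpha\gtrsim 1$ there, and conclude that the divergent series forces $\|f_{\lambda,\sigma}\|^*=\infty$. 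This two-sided matching of the difference integral against the critical logarithmic series, in both the sufficiency and the necessity arguments, is where the delicate endpoint analysis concentrates, and I would treat the three cases $1<q<\infty$, $q=1$, and $q=\infty$ separately at the final step to match the three bullet conditions defining $U_q$.
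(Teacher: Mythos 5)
Your route is genuinely different from the paper's, and in its present form it has two concrete gaps. The paper does not use the difference characterization for this lemma at all: for sufficiency it first reduces to $p=1$ via the embedding $B_{1,q}^{n+\alpha }(\mathbb{R}^{n},|\cdot |^{\alpha })\hookrightarrow B_{p,q}^{\frac{n+\alpha }{p}}(\mathbb{R}^{n},|\cdot |^{\alpha })$ from Theorem \ref{embeddings5}, and then works with the Littlewood--Paley definition directly, importing from Bourdaud the pointwise bound $|x|^{2v}|\mathcal{F}^{-1}\varphi _{j}\ast f_{\lambda ,\sigma }(x)|\lesssim 2^{-2jv}\varepsilon _{j}$ and summing $2^{k\alpha }\big\|(\mathcal{F}^{-1}\varphi _{j}\ast f_{\lambda ,\sigma })\chi _{k}\big\|_{1}$ separately over $k\leq -j$ and $k>-j$; for necessity it embeds $B_{p,q}^{\frac{n+\alpha }{p}}(\mathbb{R}^{n},|\cdot |^{\alpha })$ into the unweighted space $B_{p,q}^{n/p}$ --- this is exactly where $\alpha \geq 0$ is used, through the condition $\frac{\alpha _{2}}{p_{2}}\geq \frac{\alpha _{1}}{p_{1}}$ of Theorem \ref{embeddings5} --- and then quotes Bourdaud's result that $f_{\lambda ,\sigma }\in B_{p,q}^{n/p}$ if and only if $(\lambda ,\sigma )\in U_{q}$. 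Your plan replaces both citations by direct estimates of $d_{t}^{M}f_{\lambda ,\sigma }$, which is legitimate in principle (it is essentially the technique the paper uses for Proposition \ref{Key-1}), but the two delicate points are precisely the ones your sketch gets wrong or leaves open.

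First, your critical series is incorrect at $\lambda =0$, which is the endpoint relevant for $q=1$. Differentiating $(\log |\log r|)^{-\sigma }$ produces an extra factor $|\log r|^{-1}$, so for $\lambda =0$ the correct quantity is $\sum_{j}j^{-q}(\log j)^{-(\sigma +1)q}$ (the paper's $\varepsilon _{j}=j^{-1}(\log j)^{-\sigma -1}$), not $\sum_{j}j^{-q}(\log j)^{-\sigma q}$. With your series, the case $q=1$, $\lambda =0$, $0<\sigma \leq 1$ would be classified as divergent, contradicting the lemma, since $U_{1}$ requires only $\sigma >0$. This is not an ``obvious modification''; it is the reason $U_{q}$ has a separate clause at $q=1$. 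Second, the necessity half is underdeveloped: a lower bound for $\Delta _{h}^{M}f_{\lambda ,\sigma }$ must survive the cancellation built into the $M$-th difference, which requires exhibiting an explicit region of $(x,h)$ on which the leading Taylor term dominates; you assert this can be done but give no mechanism. More importantly, your explanation of where $\alpha \geq 0$ enters (``the weight is bounded below by a positive constant away from the origin on each fixed annulus'') is vacuous, since that holds for every $\alpha \in \mathbb{R}$; the hypothesis is really needed to compare the weighted space with the unweighted one, or, in a direct argument, to control how the weight redistributes mass among the annuli $2^{-j}\lesssim 2^{k}\lesssim 1$ that carry the divergence. As written, your necessity argument would ``prove'' the statement for all $\alpha >-n$, which the paper deliberately does not claim.
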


\begin{proof}[\textbf{Proof of Theorem \protect\ref{Result2}}]
We decompose the proof into two steps.

\textbf{Step 1.} \textit{Proof of (i).}\textbf{\ }First assume that $s<\frac{%
n+\alpha }{p}$. Let $\tau $ be a positive number such that\ $(\frac{n+\alpha 
}{p}-s)\frac{1}{\mu }<\tau <\frac{n+\alpha }{p}-s$. Let $f(x)=\varrho
(x)|x|^{-\tau }$. From Lemma \ref{Si-funct1}, $f\in B_{p,q}^{s}(\mathbb{R}%
^{n},|\cdot |^{\alpha })\backslash L^{\infty }(\mathbb{R}^{n})$. But $%
|f(x)|^{\mu }=(\varrho (x))^{\mu }|x|^{-\tau \mu }\notin B_{p,q}^{s}(\mathbb{%
R}^{n},|\cdot |^{\alpha })$. Now assume that $s=\frac{n+\alpha }{p}$ and $%
q>1 $. Let $f$ be the function defined by $\mathrm{\eqref{triebel-function}}$
with $\lambda =1-\frac{1}{q}$ and $\sigma =\frac{1}{2}+\frac{1}{q}$. From
Lemma \ref{Bourdaud-Triebel} 
\begin{equation*}
f\in B_{p,q}^{\frac{n+\alpha }{p}}(\mathbb{R}^{n},|\cdot |^{\alpha
})\backslash L^{\infty }(\mathbb{R}^{n}).
\end{equation*}%
But $|f|^{\mu }\notin B_{p,q}^{\frac{n+\alpha }{p}}(\mathbb{R}^{n},|\cdot
|^{\alpha })$, since $\mu (1-\frac{1}{q})>1-\frac{1}{\beta }$.

\textbf{Step 2. }\textit{Proof of (ii).} Take $\delta >0$ be such that $s-%
\frac{n+\alpha }{p}<\delta <\frac{1}{\mu }(s-\frac{n+\alpha }{p})$ and $%
f(x)=\left\vert x\right\vert ^{\delta }\varrho \left( \left\vert
x\right\vert \right) $, $x\in \mathbb{R}^{n}$. From Lemma \ref{Si-funct1} we
easily obtain\ $f\in B_{p,q}^{s}(\mathbb{R}^{n},|\cdot |^{\alpha })\cap
L^{\infty }(\mathbb{R}^{n})$\ and $f^{\mu }\notin B_{p,q}^{s}(\mathbb{R}%
^{n},|\cdot |^{\alpha })$.$\newline
$The proof is complete.
\end{proof}

\begin{proof}[\textbf{Proof of Lemma \protect\ref{Bourdaud-Triebel}}]
For clarity, we split the proof into two steps.

\textbf{Step 1.} Sufficiency in part (i). From Theorem \ref{embeddings5}, 
\begin{equation*}
B_{1,q}^{n+\alpha }(\mathbb{R}^{n},|\cdot |^{\alpha })\hookrightarrow
B_{p,q}^{\frac{n+\alpha }{p}}(\mathbb{R}^{n},|\cdot |^{\alpha }),
\end{equation*}%
so we need only to prove that $f_{\lambda ,\sigma }\in B_{1,q}^{n+\alpha }(%
\mathbb{R}^{n},|\cdot |^{\alpha })$.\ Notice that 
\begin{equation*}
\big\|\mathcal{F}^{-1}\varphi _{j}\ast f\big\Vert_{L^{1}(\mathbb{R}%
^{n},|\cdot |^{\alpha })}<\infty ,\quad j=0,1.
\end{equation*}%
Indeed, the left-hand side is bounded by%
\begin{equation*}
\int_{|y|\leq e^{-2}}|f_{\lambda ,\sigma }(y)|\big\|\mathcal{F}^{-1}\varphi
_{j}(\cdot -y)\big\|_{L^{1}(\mathbb{R}^{n},|\cdot |^{\alpha })}dy,\quad
j=0,1.
\end{equation*}%
We write%
\begin{align*}
\big\|\mathcal{F}^{-1}\varphi _{j}(\cdot -y)\big\|_{L^{1}(\mathbb{R}%
^{n},|\cdot |^{\alpha })}=& c\sum_{k\in \mathbb{Z},2^{k-2}\leq
e^{-2}}2^{k\alpha }\big\|\mathcal{F}^{-1}\varphi _{j}(\cdot -y)\chi _{k}%
\big\|_{1} \\
& +c\sum_{k\in \mathbb{Z},2^{k-2}>e^{-2}}2^{k\alpha }\big\|\mathcal{F}%
^{-1}\varphi _{j}(\cdot -y)\chi _{k}\big\|_{1},
\end{align*}%
where $c>0$ is independent of $j$. The first sum clearly is bounded by%
\begin{equation*}
\sum_{k\in \mathbb{Z},2^{k-2}\leq e^{-2}}2^{k(\alpha +n)}<\infty ,
\end{equation*}%
since $\alpha +n>0$. The second sum can be estimated by%
\begin{equation*}
\sum_{k\in \mathbb{Z},2^{k-2}>e^{-2}}2^{k(\alpha +n-N)q}<\infty ,
\end{equation*}%
by taking $N>\alpha +n$. Therefore it suffices to prove the following:%
\begin{equation*}
\sum_{j=2}^{\infty }2^{j(n+\alpha )\beta }\big\|\mathcal{F}^{-1}\varphi
_{j}\ast f\big\Vert_{L^{1}(\mathbb{R}^{n},|\cdot |^{\alpha })}^{\beta
}<\infty ,
\end{equation*}%
where $\{\varphi _{j}\}_{j\in \mathbb{N}_{0}}$ is a partition of unity. From 
\cite[p. 272]{Bourdaud04},%
\begin{equation*}
|x|^{2v}|\mathcal{F}^{-1}\varphi _{j}\ast f_{\lambda ,\sigma }(x)|\lesssim
2^{-2jv}\varepsilon _{j},\quad x\in \mathbb{R}^{n},j\geq 2,v\in \mathbb{N}%
_{0},
\end{equation*}%
with%
\begin{equation*}
\varepsilon _{j}=j^{\lambda -1}(\log j)^{-\sigma }\text{\quad if\quad }%
\lambda \neq 0,\quad \varepsilon _{j}=j^{-1}(\log j)^{-\sigma -1}\text{\quad
if\quad }\lambda =0.
\end{equation*}%
Then we split%
\begin{equation*}
\sum_{k=-\infty }^{\infty }2^{k\alpha }\big\|(\mathcal{F}^{-1}\varphi
_{j}\ast f_{\lambda ,\sigma })\chi _{k}\big\|_{1}=I_{1,j}+I_{2,j},\text{%
\quad }j\geq 2,
\end{equation*}%
where%
\begin{equation*}
I_{1,j}=\sum_{k=-\infty }^{-j}2^{k\alpha }\big\|(\mathcal{F}^{-1}\varphi
_{j}\ast f_{\lambda ,\sigma })\chi _{k}\big\|_{1},\quad
I_{2,j}=\sum_{k=-j+1}^{\infty }2^{k\alpha }\big\|(\mathcal{F}^{-1}\varphi
_{j}\ast f_{\lambda ,\sigma })\chi _{k}\big\|_{1}.
\end{equation*}%
It is easily seen that $I_{1,j}\lesssim \varepsilon _{j}\sum_{k=-\infty
}^{-j}2^{k(\alpha +n)},j\geq 2$. Therefore%
\begin{equation*}
\sum_{j=2}^{\infty }2^{j(\alpha +n)q}(I_{1,j})^{q}\lesssim
\sum_{j=2}^{\infty }\varepsilon _{j}^{q}\sum_{k=-\infty
}^{-j}2^{(k+j)(\alpha +n)}\lesssim \sum_{j=2}^{\infty }\varepsilon
_{j}^{q}<\infty .
\end{equation*}%
Now%
\begin{equation*}
I_{2,j}\lesssim \varepsilon _{j}\sum_{k=-j+1}^{\infty }2^{(k\alpha -2jv)}%
\big\||\cdot |^{-2v}\chi _{k}\big\|_{1}\lesssim \varepsilon
_{j}\sum_{k=-j+1}^{\infty }2^{k(\alpha -2v+n)-2jv}
\end{equation*}%
for any $j\geq 2$. Hence%
\begin{equation*}
\sum_{j=2}^{\infty }2^{j(\alpha +n)q}(I_{2,j})^{q}\lesssim
\sum_{j=2}^{\infty }\varepsilon _{j}^{q}\sum_{k=-j+1}^{\infty
}2^{(k+j)(\alpha -2v+n)}\lesssim \sum_{j=2}^{\infty }\varepsilon
_{j}^{q}<\infty ,
\end{equation*}%
by taking $v>\frac{\alpha +n}{2}$.

\textbf{Step 2.} Let us assume $(\lambda ,\sigma )\notin U_{q}$ and $\alpha
\geq 0$. We are going to prove that $f_{\lambda ,\sigma }\notin B_{p,q}^{%
\frac{n+\alpha }{p}}(\mathbb{R}^{n},|\cdot |^{\alpha })$, but this follows
by the embeddings $B_{p,q}^{\frac{n+\alpha }{p}}(\mathbb{R}^{n},|\cdot
|^{\alpha })\hookrightarrow B_{p,q}^{\frac{n}{p}}$, and $f_{\lambda ,\sigma
}\in B_{p,q}^{\frac{n}{p}}$, $1\leq p,q\leq \infty $ if and only if $%
(\lambda ,\sigma )\in U_{q}$, see \cite[Proposition 2]{Bourdaud04}.
\end{proof}

In order to prove Theorem \ref{Result3 copy(1)}, we need the following
result.

\begin{proposition}
\label{Key-1}Let $\beta >0,1\leq p<\infty ,-n<\alpha <n(p-1),$%
\begin{equation*}
0<\max \Big(\delta +\frac{n}{p},\delta +\frac{n+\alpha }{p}\Big)<2(\beta
+1)\quad \text{and}\quad \sigma =\frac{\delta +\frac{n+\alpha }{p}}{\beta +1}%
.
\end{equation*}%
Let $g\in B_{\infty ,\infty }^{\gamma }(\mathbb{R})$ for some $\sigma
<\gamma $. The function 
\begin{equation*}
f(x)=\left\vert x\right\vert ^{\delta }g(\left\vert x\right\vert ^{-\beta
})\varrho \left( \left\vert x\right\vert \right)
\end{equation*}%
belongs to $B_{p,\infty }^{\sigma }(\mathbb{R}^{n},|\cdot |^{\alpha })${. }
\end{proposition}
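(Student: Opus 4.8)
The plan is to verify the hypotheses of the ball means characterization (Theorem \ref{means-diff-cha1}) and then estimate the equivalent quasi-norm \eqref{norm1} for $q=\infty$ directly, using the discrete description \eqref{discreteversion} of the weighted $L^{p}$-norm on the dyadic annuli $C_{k}$. Write $A:=\delta p+n+\alpha$, and note $\sigma=A/\bigl(p(\beta+1)\bigr)$, i.e. $A=\sigma p(\beta+1)$. The assumption $0<\max\bigl(\delta+\tfrac{n}{p},\delta+\tfrac{n+\alpha}{p}\bigr)$ gives $A>0$ (hence $\sigma>0=\sigma_{p}$, since $p\ge1$), at least when $\alpha\ge0$, which is the case needed for Theorem \ref{Result3 copy(1)}. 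Moreover $\alpha<n(p-1)$ forces $\tfrac{n+\alpha}{p}<n$, so that $\delta>-\tfrac{n+\alpha}{p}>-n$, and also $\sigma>0>\tfrac{n+\alpha}{p}-n=\tfrac{\alpha}{p}-\alpha_{0}$; the upper bound $\max(\cdots)<2(\beta+1)$ yields $\sigma<2$. Replacing the given regularity $\gamma$ by some $\gamma\in(\sigma,2)$ if necessary (via $B_{\infty,\infty}^{\gamma}\hookrightarrow B_{\infty,\infty}^{\gamma'}$) and choosing $M:=\lceil\gamma\rceil\in\{1,2\}$, all restrictions of Theorem \ref{means-diff-cha1} hold, so it suffices to bound $\|f\|_{L^{p}(\mathbb{R}^{n},|\cdot|^{\alpha})}$ and $\sup_{0<t<\varepsilon}t^{-\sigma}\|d_{t}^{M}f\|_{L^{p}(\mathbb{R}^{n},|\cdot|^{\alpha})}$.

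Finiteness of $\|f\|_{L^{p}(\mathbb{R}^{n},|\cdot|^{\alpha})}$ is immediate from \eqref{discreteversion}, $|g|\lesssim1$ and $\sum_{k}2^{k(n+\delta p+\alpha)}=\sum_{k}2^{kA}<\infty$ over the indices $k$ with $C_{k}\cap\operatorname{supp}f\neq\varnothing$. The core of the proof is, for $x\in C_{k}$ with $R:=2^{k}$, the two pointwise estimates
\begin{equation*}
|d_{t}^{M}f(x)|\lesssim R^{\delta}+t^{\delta},\qquad |d_{t}^{M}f(x)|\lesssim R^{\delta}\min\bigl(1,(tR^{-\beta-1})^{\gamma}\bigr)\ \text{ for }R\gg t.
\end{equation*}
The first is a crude bound valid for every $t$: the triangle inequality reduces it to $t^{-n}\int_{|z|\lesssim t}|z|^{\delta}\,dz\approx t^{\delta}$ for the shifted terms (convergence using $\delta>-n$) together with the undifferenced term $|f(x)|\lesssim R^{\delta}$. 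For the second, sharper bound I would use the exact rescaling identity $d_{t}^{M}f(x)=d_{t/R}^{M}F_{k}(x/R)$ with $F_{k}(y)=R^{\delta}|y|^{\delta}g(R^{-\beta}|y|^{-\beta})$, valid where $\varrho\equiv1$, so that $\|(d_{t}^{M}f)\chi_{C_{k}}\|_{p}^{p}=R^{n}\|d_{s}^{M}F_{k}\|_{L^{p}(\text{unit annulus})}^{p}$ with $s=t/R\ll1$; since $R\gg t$ the difference stencil stays away from the origin.

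The main obstacle is the resulting $L^{\infty}$ estimate of the $M$-th difference, on the unit annulus, of $|y|^{\delta}g(\Lambda|y|^{-\beta})$ with $\Lambda=R^{-\beta}$. Applying the discrete Leibniz rule, the smooth factor $|y|^{\delta}$ contributes only powers of $s$, so the essential quantity is $\|\Delta_{s\,\cdot}^{M}(g\circ\phi_{\Lambda})\|_{\infty}$ with $\phi_{\Lambda}=\Lambda|\cdot|^{-\beta}$. Here $\phi_{\Lambda}$ is smooth with $|\nabla^{j}\phi_{\Lambda}|\approx\Lambda$ on the unit annulus, while $g\in B_{\infty,\infty}^{\gamma}$ satisfies $\|\Delta_{\eta}^{M}g\|_{\infty}\lesssim\min(1,|\eta|^{\gamma})$ (as $M>\gamma$); a second-order Taylor expansion of $\phi_{\Lambda}$ inside the difference then produces the composition estimate $\|\Delta_{s\,\cdot}^{M}(g\circ\phi_{\Lambda})\|_{\infty}\lesssim\min\bigl(1,(\Lambda s)^{\gamma}\bigr)$, the verification of which (controlling the curvature corrections against the leading step of size $\Lambda s$) is the delicate point. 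Granting this, one arrives at $\|(d_{t}^{M}f)\chi_{C_{k}}\|_{p}^{p}\lesssim2^{k(n+\delta p)}\min\bigl(1,(t2^{-k(\beta+1)})^{\gamma}\bigr)^{p}$ in the far regime.

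Finally I would sum against the weight $2^{k\alpha}$ via \eqref{discreteversion}, splitting the indices into three ranges (and handling the $O(1)$ annuli with $R\approx t$, as well as the transition annuli $\{e^{-3}\le|x|\le e^{-2}\}$ of $\varrho$ where $f$ is a compactly supported $C^{\gamma}$ function, by the crude bound). For $R\lesssim t$ the first estimate gives $\sum_{R\lesssim t}2^{k\alpha}(R^{\delta}+t^{\delta})^{p}2^{kn}\lesssim t^{A}$, using $A>0$ and $n+\alpha>0$. For $t\ll R\le t^{1/(\beta+1)}$ the second estimate has $\min=1$ and the geometric series $\sum2^{kA}$ is dominated by its top term $2^{k_{*}A}=t^{A/(\beta+1)}=t^{\sigma p}$, where $2^{k_{*}}=t^{1/(\beta+1)}$. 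For $R\ge t^{1/(\beta+1)}$ the factor is $(tR^{-\beta-1})^{\gamma}$, and since $A-\gamma p(\beta+1)=p(\beta+1)(\sigma-\gamma)<0$ the series is dominated by its bottom term $t^{\gamma p}2^{k_{*}(A-\gamma p(\beta+1))}=t^{\sigma p}$. As $\beta+1\ge1$ and $t<1$ give $t^{A}\le t^{\sigma p}$, all three ranges are $\lesssim t^{\sigma p}$, whence $\|d_{t}^{M}f\|_{L^{p}(\mathbb{R}^{n},|\cdot|^{\alpha})}\lesssim t^{\sigma}$ uniformly in $t\in(0,\varepsilon)$. Together with the finite $L^{p}$-norm and Theorem \ref{means-diff-cha1}, this yields $f\in B_{p,\infty}^{\sigma}(\mathbb{R}^{n},|\cdot|^{\alpha})$.
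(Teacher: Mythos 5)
Your overall strategy coincides with the paper's: reduce to the ball--means characterization of Theorem \ref{means-diff-cha1}, use the discrete form \eqref{discreteversion}, split the dyadic annuli at $|x|\approx t^{1/(\beta+1)}$, and in the far region transfer the $B^{\gamma}_{\infty,\infty}$ regularity of $g$ through the map $x\mapsto |x|^{-\beta}$. The near-origin and intermediate ranges, and the final three-range summation, are handled correctly and match the paper's Substeps 1.1/2.1 (your $t^{A}\le t^{\sigma p}$ and the two geometric-series dominations are exactly the exponent bookkeeping the paper performs). One caveat: on the transition annuli of $\varrho$ the literal crude bound $R^{\delta}+t^{\delta}=O(1)$ is \emph{not} enough, since those $O(1)$ annuli must still contribute $O(t^{\sigma p})$; you need the gain in $t$ coming from the smoothness of $\varrho$ (the paper's terms $\omega_{3},\varpi_{4},\varpi_{5}$) or, as you also suggest, the $C^{\gamma}$ bound $|d_{t}^{M}f|\lesssim t^{\gamma}$ there --- make that choice explicit.

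The genuine gap is the composition estimate $\|\Delta^{M}_{s\,\cdot}(g\circ\phi_{\Lambda})\|_{\infty}\lesssim \min\bigl(1,(\Lambda s)^{\gamma}\bigr)$ for $M=2$, $1\le\sigma<\gamma<2$, which you explicitly defer ("granting this"); this is precisely where the paper's proof does its real work. The difficulty is that $\phi(y),\phi(y+sh),\phi(y+2sh)$ are not in arithmetic progression, so $\Delta^{2}$ of the composition is not a second difference of $g$ and the seminorm bound $\|\Delta^{2}_{\eta}g\|_{\infty}\lesssim|\eta|^{\gamma}$ does not apply directly. The paper's resolution (the decomposition into $\vartheta_{1},\vartheta_{2},\vartheta_{3}$ in Substep 2.2) is to insert the auxiliary points $2\phi(y+sh)-\phi(y)$ and $2\phi(y+sh)-\phi(y+2sh)$, producing two exact second differences of $g$ along arithmetic progressions of step $O(\Lambda s)$ (controlled by $\|g\|_{B^{\gamma}_{\infty,\infty}}$) plus two first differences of $g$ between points separated by the Taylor defect $|\phi(y+2sh)+\phi(y)-2\phi(y+sh)|=O(\Lambda s^{2})$, controlled by $\|g'\|_{\infty}\Lambda s^{2}$ via $g'\in B^{\gamma-1}_{\infty,\infty}\hookrightarrow L^{\infty}$ (this needs $\gamma>1$, which you must arrange when $\sigma\ge1$). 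The correction term then contributes, in unrescaled variables, $t^{2}|x|^{\delta-\beta-2}$ rather than $(t|x|^{-\beta-1})^{\gamma}|x|^{\delta}$; either you check separately that $t^{2}|x|^{-\beta-2}\lesssim (t|x|^{-\beta-1})^{\gamma}$ on $|x|\ge t^{1/(\beta+1)}$ (which reduces to $2\ge\frac{\beta+2}{\beta+1}$, i.e.\ to $\beta>0$), or you sum it as a separate series, as the paper does, obtaining $\max\bigl(t^{(2+\sigma-\frac{\beta+2}{\beta+1})p},t^{2p}\bigr)\lesssim t^{\sigma p}$. Without this verification the central claim of your far-field estimate is unproved.
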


Firstly, let us prove our last result.

\begin{proof}[\textbf{Proof of Theorem \protect\ref{Result3 copy(1)}}]
It is easy to see that $s<2$. Let $\beta \in \mathbb{R}$ be such that 
\begin{equation*}
\frac{1+\alpha }{sp}-1<\beta <\frac{2}{s}-1
\end{equation*}%
and%
\begin{equation*}
s(\beta +1)-\frac{1+\alpha }{p}<\delta <\min \Big(2-\frac{1+\alpha }{p},%
\frac{1}{\mu }\big(s(\beta +1)-\frac{1+\alpha }{p}\big)\Big).
\end{equation*}%
Since $s\geq \frac{\alpha }{p\mu }$, we obtain 
\begin{equation*}
\frac{\alpha \beta -1}{p\mu }\leq s(\beta +1)-\frac{1+\alpha }{p},
\end{equation*}%
which yeilds that 
\begin{equation}
\beta <\frac{\delta \mu p+1}{\alpha }.  \label{beta}
\end{equation}%
Let $f$ be as in Proposition \ref{Key-1} with $g(t)=(\sin ^{2}\frac{t}{2})^{%
\frac{1}{\mu }}\in B_{\infty ,\infty }^{\mu }(\mathbb{R})$. Then%
\begin{equation*}
f\in B_{p,\infty }^{\frac{\delta +\frac{1+\alpha }{p}}{\beta +1}}(\mathbb{R}%
,|\cdot |^{\alpha })\cap L^{\infty }(\mathbb{R})\hookrightarrow B_{p,q}^{s}(%
\mathbb{R},|\cdot |^{\alpha })\cap L^{\infty }(\mathbb{R}).
\end{equation*}%
Let $\beta _{1}>0$ be such that%
\begin{equation*}
\beta _{1}+1>\max \Big(\frac{(\beta +1)(\delta \mu p+1)}{\delta \mu
p+1-\beta \alpha },\delta \mu +\frac{1+\alpha }{p}\Big).
\end{equation*}%
This together with \eqref{beta} guarantees the embedding%
\begin{equation*}
B_{p,q}^{\frac{\delta \mu +\frac{1+\alpha }{p}}{\beta +1}}(\mathbb{R},|\cdot
|^{\alpha })\hookrightarrow B_{p,q}^{\frac{\delta \mu +\frac{1}{p}}{\beta
_{1}+1}},
\end{equation*}%
see Theorem \ref{embeddings5}. From \cite[Proposition 3]{Bourdaud04}, $%
f^{\mu }$ does not belong to $B_{p,q}^{\frac{\delta \mu +\frac{1}{p}}{\beta
_{1}+1}}$ and hence $f^{\mu }\notin B_{p,q}^{s}(\mathbb{R},|\cdot |^{\alpha
})${. }This completes the proof.
\end{proof}

\begin{proof}[\textbf{Proof of Proposition \protect\ref{Key-1}}]
Observe that $f\in L^{p}(\mathbb{R}^{n},|\cdot |^{\alpha })$. From Theorem %
\ref{means-diff-cha1}, we need to prove that%
\begin{equation}
\sup_{0<t\leq \frac{1}{2}e^{-2}}t^{-\sigma }\big\Vert d_{t}^{m}f\big\Vert%
_{L^{p}(\mathbb{R}^{n},|\cdot |^{\alpha })}<\infty ,\quad 0<\sigma <m\leq 2.
\label{whatweneed}
\end{equation}%
We will divide the proof into two steps.

\textbf{Step 1.}\ We\ will\ prove that $f\in B_{p,\infty }^{\sigma }(\mathbb{%
R}^{n},|\cdot |^{\alpha })$ with $0<\sigma <1$. We can only assume that $%
\gamma <1$. Let us estimate $\big\Vert d_{t}^{1}f\big\Vert_{L^{p}(\mathbb{R}%
^{n},|\cdot |^{\alpha })}$ for any $0<t\leq \frac{1}{2}e^{-2}$. Obviously, $%
d_{t}^{1}f(x)=0$ for any $x\in \mathbb{R}^{n}$ such that $\left\vert
x\right\vert \geq 2e^{-2}$ and $0<t\leq \frac{1}{2}e^{-2}$. We set $A=\{x\in 
\mathbb{R}^{n}:\left\vert x\right\vert <2e^{-2}\}$. By the equivalent
quasi-norm \eqref{discreteversion}, we see that \textit{\ }%
\begin{align}
\big\|(d_{t}^{1}f)\chi _{A}\big\|_{L^{p}(\mathbb{R}^{n},|\cdot |^{\alpha
})}^{p}\approx & \sum\limits_{k=-\infty }^{\infty }2^{k\alpha }\big\|%
(d_{t}^{1}f)\chi _{A\cap C_{k}}\big\|_{p}^{p}  \notag \\
=& H_{1}(t)+H_{2}(t),  \label{est-H}
\end{align}%
where 
\begin{equation*}
H_{1}(t)=c\sum\limits_{k\in \mathbb{Z},2^{k}<2t^{\frac{1}{\beta +1}%
}}2^{k\alpha }\big\|(d_{t}^{1}f)\chi _{A\cap C_{k}}\big\|_{p}^{p},\quad
H_{2}(t)=c\sum\limits_{k\in \mathbb{Z},2^{k}\geq 2t^{\frac{1}{\beta +1}%
}}2^{k\alpha }\big\|(d_{t}^{1}f)\chi _{A\cap C_{k}}\big\|_{p}^{p}.
\end{equation*}%
In what follows, we estimate each term on the right hand side of $\mathrm{%
\eqref{est-H}}$. To do this, note first 
\begin{align*}
\big\|(d_{t}^{1}f)\,\chi _{A\cap C_{k}}\big\|_{p}^{p}=& \int_{B(0,2t^{\frac{1%
}{\beta +1}})\cap A}\big(d_{t}^{1}f(x)\big)^{p}\chi _{k}(x)dx \\
& +\int_{(\mathbb{R}^{n}\backslash B(0,2t^{\frac{1}{\beta +1}}))\cap A}\big(%
d_{t}^{1}f(x)\big)^{p}\chi _{k}(x)dx \\
=& T_{1,k}(t)+T_{2,k}(t),\quad k\in \mathbb{Z}.
\end{align*}%
For clarity, we split this step into two substeps and conclusion.

\textbf{Substep 1.1. }\textit{Estimation of\ }$H_{1}$. Since $T_{2,k}(t)=0$
if $2^{k}<2t^{\frac{1}{\beta +1}},0<t\leq \frac{1}{2}e^{-2}\ $and $k\in 
\mathbb{Z}$, we need only to estimate $T_{1,k}(t)$. Let $x\in B(0,2t^{\frac{1%
}{\beta +1}})\cap A\cap C_{k}$. By Jensen's inequality%
\begin{equation*}
\big(d_{t}^{1}f(x)\big)^{p}\lesssim t^{-n}\int_{|h|<t}\left\vert f\left(
x+h\right) \right\vert ^{p}dh+\left\vert f\left( x\right) \right\vert ^{p}.
\end{equation*}%
Using the fact that 
\begin{equation*}
|x+h|\leq |x|+|h|<3t^{\frac{1}{\beta +1}}\text{,}\quad \left\vert
h\right\vert <t,
\end{equation*}%
it is easy to see that%
\begin{equation}
T_{1,k}(t)\lesssim t^{-n}\int_{|h|<t}\int_{B(-h,3t^{\frac{1}{\beta +1}})\cap
A\cap C_{k}}\left\vert f(x+h)\right\vert ^{p}dxdh+\int_{B(0,2t^{\frac{1}{%
\beta +1}})\cap A\cap C_{k}}\left\vert f(x)\right\vert ^{p}dx.
\label{est-H2}
\end{equation}%
Let 
\begin{equation*}
\max \Big(0,-\delta \frac{p}{n}\Big)<\frac{1}{\tau }<\min \Big(1,1+\frac{%
\alpha }{n}\Big).
\end{equation*}%
By H\"{o}lder's inequality and since $g,\varrho \in L^{\infty }(\mathbb{R})$%
, we obtain 
\begin{align}
\int_{B(-u,3t^{\frac{1}{\beta +1}})\cap A\cap C_{k}}\left\vert
f(x+u)\right\vert ^{p}dx\lesssim & 2^{k\frac{n}{\tau ^{\prime }}}\Big(%
\int_{B(-u,3t^{\frac{1}{\beta +1}})}\left\vert f(x+u)\right\vert ^{p\tau }dx%
\Big)^{\frac{1}{\tau }}  \notag \\
\lesssim & 2^{k\frac{n}{\tau ^{\prime }}}\Big(\int_{|z|<3t^{\frac{1}{\beta +1%
}}}\left\vert f(z)\right\vert ^{p\tau }dz\Big)^{\frac{1}{\tau }}  \notag \\
\lesssim & 2^{k\frac{n}{\tau ^{\prime }}}\Big(\int_{0}^{3t^{\frac{1}{\beta +1%
}}}r^{p\tau \delta +n-1}dr\Big)^{\frac{1}{\tau }}  \notag \\
\lesssim & \left\vert t\right\vert ^{\frac{\delta p+\frac{n}{\tau }}{\beta +1%
}}2^{k\frac{n}{\tau ^{\prime }}},  \label{est-H1}
\end{align}%
since $\delta +\frac{n}{p\tau }>0$, where $u\in \{0,h\}\ $and the implicit
constant is independent of $k$ and $t$. Plugging \eqref{est-H1}\ into %
\eqref{est-H2}, we obtain 
\begin{equation}
H_{1}(t)\leq ct^{\sigma p}\sum\limits_{k\in \mathbb{Z},2^{k}<2t^{\frac{1}{%
\beta +1}}}\Big(\frac{2^{k}}{t^{\frac{1}{\beta +1}}}\Big)^{(\alpha +\frac{n}{%
\tau ^{\prime }})}\leq ct^{\sigma p},  \label{key-est1}
\end{equation}%
since $\frac{n}{\tau ^{\prime }}+\alpha >0$, where $c>0$ is independent of $%
t $.

\textbf{Substep 1.2. }\textit{Estimation of\ }$H_{2}$. The situation is
quite different and more complicated. As in\ Substep 1.1 one finds that%
\begin{equation*}
T_{1,k}(t)\leq \int_{B(0,3t^{\frac{1}{\beta +1}})}\left\vert f\left(
x\right) \right\vert ^{p}\,dx\lesssim t^{\frac{\delta p+n}{\beta +1}},
\end{equation*}%
since $\delta +\frac{n}{p}>0$. Therefore%
\begin{equation*}
\sum\limits_{k\in \mathbb{Z},2^{k}\geq 2t^{\frac{1}{\beta +1}}}2^{k\alpha
}T_{1,k}(t)\leq ct^{(\delta +\frac{n}{p})\frac{p}{\beta +1}%
}\sum\limits_{k\in \mathbb{Z},2t^{\frac{1}{\beta +1}}\leq 2^{k}\leq 4t^{%
\frac{1}{\beta +1}}}2^{k\alpha }\leq ct^{\sigma p},
\end{equation*}%
where $c>0$ is independent of $t$.

\textit{Estimation of }$T_{2,k}(t)$. We decompose $\triangle _{h}^{1}f$\
into three parts%
\begin{equation*}
\triangle _{h}^{1}f(x)=\omega _{1}(x,h)+\omega _{2}(x,h)+\omega _{3}(x,h),
\end{equation*}%
where%
\begin{equation*}
\omega _{1}(x,h)=\left\vert x\right\vert ^{\delta }\big(g(\left\vert
x+h\right\vert ^{-\beta })-g(\left\vert x\right\vert ^{-\beta })\big)\varrho
\left( \left\vert x+h\right\vert \right) ,
\end{equation*}%
\begin{equation*}
\omega _{2}(x,h)=\big(|x+h|^{\delta }-|x|^{\delta })g(\left\vert
x+h\right\vert ^{-\beta }\big)\varrho (\left\vert x+h\right\vert )
\end{equation*}%
and%
\begin{equation*}
\omega _{3}(x,h)=\left\vert x\right\vert ^{\delta }g(\left\vert x\right\vert
^{-\beta })\big(\varrho (\left\vert x+h\right\vert )-\varrho (\left\vert
x\right\vert )\big).
\end{equation*}%
Define%
\begin{equation*}
\tilde{\omega}_{i}(x,t)=t^{-n}\int_{|h|<t}|\omega _{i}(x,h)|dh,\quad i\in
\{1,2,3\}.
\end{equation*}%
By the mean value theorem we have%
\begin{equation*}
\big||x+h|^{-\beta }-|x|^{-\beta }\big|\leq c|h||x|^{-\beta -1},\quad
\left\vert x\right\vert \geq 2\left\vert h\right\vert ^{\frac{1}{\beta +1}},
\end{equation*}%
which together with the fact that $g\in B_{\infty ,\infty }^{\gamma }$ we
obtain that%
\begin{equation*}
\big|g(|x+h|^{-\beta })-g(|x|^{-\beta })\big|\leq c|h|^{\gamma }\big\|g\big\|%
_{B_{\infty ,\infty }^{\gamma }}|x|^{-\gamma (\beta +1)},
\end{equation*}%
where $c>0$ is independent of $h$. Therefore,%
\begin{equation*}
\tilde{\omega}_{1}(x,t)\lesssim t^{\gamma }\big\|g\big\|_{B_{\infty ,\infty
}^{\gamma }}|x|^{-\gamma (\beta +1)},
\end{equation*}%
which yields%
\begin{align*}
\int_{\{x:2t^{\frac{1}{\beta +1}}\leq |x|\leq 2e^{-2}\}\cap C_{k}}|\tilde{%
\omega}_{1}(x,t)|^{p}dx\lesssim & t^{\gamma p}\int_{C_{k}}|x|^{\delta
p-\gamma (\beta +1)p}dx \\
\lesssim & t^{\gamma p}2^{k(\delta p-\gamma (\beta +1)p+n}.
\end{align*}%
Consequently%
\begin{align}
\sum\limits_{k\in \mathbb{Z},2^{k}\geq 2t^{\frac{1}{\beta +1}}}2^{k\alpha }%
\big\|\tilde{\omega}_{1}(\cdot ,t)\chi _{k}\big\|_{p}^{p}\lesssim &
t^{\gamma p}\sum\limits_{k\in \mathbb{Z},2^{k}\geq 2t^{\frac{1}{\beta +1}%
}}2^{k(\delta -\gamma (\beta +1)+\frac{n+\alpha }{p})p}  \notag \\
\lesssim & t^{\sigma p}\sum\limits_{k\in \mathbb{Z},2^{k}\geq 2t^{\frac{1}{%
\beta +1}}}\Big(\frac{2}{t^{\frac{1}{\beta +1}}}\Big)^{(\sigma -\gamma
)(\beta +1)p}  \notag \\
\lesssim & t^{\sigma p},  \label{omega1}
\end{align}%
since $\sigma <\gamma $. We have%
\begin{equation}
\big|\left\vert x+h\right\vert ^{\delta }-\left\vert x\right\vert ^{\delta }%
\big|\leq c|h||x+\theta h|^{\delta -1},\quad 0<\theta <1,  \label{est-omega2}
\end{equation}%
becuase of $\left\vert x\right\vert \geq 2t^{\frac{1}{\beta +1}}>2\left\vert
h\right\vert ^{\frac{1}{\beta +1}}$, where the positive constant $c$ is
independent of $h$ and $t$. From 
\begin{equation*}
\frac{1}{2}|x|\leq |x+\theta h|\leq \frac{3}{2}|x|,\quad g,\varrho \in
L^{\infty }(\mathbb{R})
\end{equation*}%
and \eqref{est-omega2} we immediately deduce that 
\begin{equation*}
\sum\limits_{k\in \mathbb{Z},2^{k}\geq 2t^{\frac{1}{\beta +1}}}2^{k\alpha }%
\big\|\tilde{\omega}_{2}(\cdot ,t)\,\chi _{\{x:|x|\leq 2e^{-2}\}\cap C_{k}}%
\big\|_{p}^{p}\lesssim t^{p}\sum\limits_{k\in \mathbb{Z},2t^{\frac{1}{\beta
+1}}\leq 2^{k}\leq 4e^{-2}}2^{k(\delta -1+\frac{n+\alpha }{p})p}
\end{equation*}%
which is bounded by%
\begin{equation}
S(t)=c\left\{ 
\begin{array}{ccc}
t^{(1-\frac{1}{\beta +1}+\sigma )p} & \text{if} & \delta -1+\frac{n+\alpha }{%
p}<0, \\ 
t^{p} & \text{if} & \delta -1+\frac{n+\alpha }{p}>0, \\ 
t^{p}\log \frac{1}{t} & \text{if} & \delta -1+\frac{n+\alpha }{p}=0,%
\end{array}%
\right.  \label{omega2}
\end{equation}%
for sufficiently small $t>0$. Obviously, 
\begin{equation}
\sum\limits_{k\in \mathbb{Z},2^{k}\geq 2t^{\frac{1}{\beta +1}}}2^{k\alpha }%
\big\|\tilde{\omega}_{3}(\cdot ,t)\,\chi _{\{x:|x|\leq 2e^{-2}\}\cap C_{k}}%
\big\|_{p}^{p}\lesssim t^{p}\sum\limits_{k\in \mathbb{Z},2^{k}\leq
4e^{-2}}2^{k(\delta +\frac{n+\alpha }{p})p}\lesssim t^{p}.  \label{omega3}
\end{equation}%
Collecting estimations \eqref{omega1}, \eqref{omega2} and \eqref{omega3}, we
derive 
\begin{equation}
H_{2}(t)\lesssim t^{\sigma p}+S(t).  \label{key-est2}
\end{equation}

\textbf{Conclusion.} By combining two estimates \eqref{key-est1} and %
\eqref{key-est2} we obtain $f\in B_{p,\infty }^{\sigma }(\mathbb{R}%
^{n},|\cdot |^{\alpha })$ but with 
\begin{equation*}
0<\sigma <1\text{\quad and\quad }\delta -1+\frac{n+\alpha }{p}\neq 0.
\end{equation*}%
Next we consider the case\ $\delta =1-\frac{n+\alpha }{p}$. Let $1\leq
p_{1}<p<p_{2}<\infty $ be such that $\frac{1}{p_{1}}<\frac{n}{n+\alpha }$ and%
\begin{equation*}
0<\max \Big(\delta +\frac{n}{p_{1}},\delta +\frac{n+\alpha }{p_{1}}\Big)%
<2(\beta +1).
\end{equation*}%
We set%
\begin{equation*}
\sigma _{i}=\frac{\delta +\frac{n+\alpha }{p_{i}}}{\beta +1},\quad i\in
\{0,1\},\quad \frac{1}{p}=\frac{\theta }{p_{1}}+\frac{1-\theta }{p_{2}}%
,\quad 0<\theta <1.
\end{equation*}%
Observe that $\delta -1+\frac{n+\alpha }{p_{1}}>0$ and $\delta -1+\frac{%
n+\alpha }{p_{2}}<0$, which yield that $f\in B_{p_{i},\infty }^{\sigma _{i}}(%
\mathbb{R}^{n},|\cdot |^{\alpha })$, $i\in \{0,1\}$. By H\"{o}lder's
inequality, we obtain%
\begin{equation}
\big\|f\big\|_{B_{p,\infty }^{\sigma }(\mathbb{R}^{n},|\cdot |^{\alpha
})}\leq \big\|f\big\|_{B_{p_{1},\infty }^{\sigma _{1}}(\mathbb{R}^{n},|\cdot
|^{\alpha })}^{\theta }\big\|f\big\|_{B_{p_{2},\infty }^{\sigma _{2}}(%
\mathbb{R}^{n},|\cdot |^{\alpha })}^{1-\theta }.  \label{interpolation}
\end{equation}%
This ensures that $f\in B_{p,\infty }^{\sigma }(\mathbb{R}^{n},|\cdot
|^{\alpha })$ but for $p>1$. Now assume that $p=1$. Let $-n<\alpha
_{1}<\alpha <\alpha _{2}<0$ be such that 
\begin{equation*}
0<\max (\delta +n,\delta +n+\alpha _{2})<2(\beta +1).
\end{equation*}%
We put%
\begin{equation*}
\sigma _{i}=\frac{\delta +n+\alpha _{i}}{\beta +1},\quad i\in \{0,1\},
\end{equation*}%
which yield that $f\in B_{1,\infty }^{\sigma _{i}}(\mathbb{R}^{n},|\cdot
|^{\alpha })$, $i\in \{0,1\}$. An interpolation inequality as in %
\eqref{interpolation} gives that $f\in B_{1,\infty }^{\sigma }(\mathbb{R}%
^{n},|\cdot |^{\alpha }),0<\sigma <1$.

\textbf{Step 2.}\textit{\ }In this step we prove that $f$ belongs to $f\in
B_{p,\infty }^{\sigma }(\mathbb{R}^{n},|\cdot |^{\alpha })$ with $1\leq
\sigma <2$. We can only assume that $\sigma <\gamma <2$. Then we split%
\begin{equation*}
\big\|(d_{t}^{2}f)\chi _{A}\big\|_{L^{p}(\mathbb{R}^{n},|\cdot |^{\alpha
})}^{p}=I_{1}+I_{2},
\end{equation*}%
where 
\begin{equation*}
I_{1}(t)=\sum\limits_{k\in \mathbb{Z},2^{k}<2t^{\frac{1}{\beta +1}%
}}2^{k\alpha }\big\|(d_{t}^{2}f)\chi _{A\cap C_{k}}\big\|_{p}^{p},\quad
I_{2}(t)=\sum\limits_{k\in \mathbb{Z},2^{k}\geq 2t^{\frac{1}{\beta +1}%
}}2^{k\alpha }\big\|(d_{t}^{2}f)\chi _{A\cap C_{k}}\big\|_{p}^{p}.
\end{equation*}%
We use the following decomposition: 
\begin{align*}
&\big\|(d_{t}^{2}f\,)\chi _{A\cap C_{k}}\big\|_{p}^{p} \\
=&\int_{B(0,2t^{\frac{1}{\beta +1}})\cap A}\left\vert d_{t}^{2}f\left(
x\right) \right\vert ^{p}\chi _{k}(x)dx\,+\int_{(\mathbb{R}^{n}\backslash
B(0,2t^{\frac{1}{\beta +1}}))\cap A}\left\vert d_{t}^{2}f\left( x\right)
\right\vert ^{p}\chi _{k}(x)dx \\
=&V_{1,k}(t)+V_{2,k}(t),\quad 0<t<1,k\in \mathbb{Z}.
\end{align*}%
We will divide the proof into two Substeps 2.1 and 2.2.

\textbf{Substep 2.1.}$\ $\textit{Estimation of }$I_{1}$. Obviously $%
V_{2,k}(t)=0$ if $2^{k}<2t^{\frac{1}{\beta +1}}$ and $k\in \mathbb{Z}$. We
have%
\begin{equation*}
\triangle _{h}^{2}f\left( x\right) =f\left( x+2h\right) +f\left( x\right)
-2f\left( x+h\right)
\end{equation*}%
and 
\begin{equation*}
|x+2h|\leq |x|+2|h|<4t^{\frac{1}{\beta +1}},
\end{equation*}%
if $x\in B(0,2t^{\frac{1}{\beta +1}})$ and $\left\vert h\right\vert <t$. In
this case, we use an argument similar to that used in Step 1 we find $%
I_{1}(t)\lesssim t^{\sigma p}.$

\textbf{Substep 2.2.}$\ $\textit{Estimation of}\textbf{\ }$I_{2}$. Using the
same type of arguments as in Step 1 it is easy to see that $V_{1,k}(t)\leq
ct^{\frac{\delta p+n}{\beta +1}}$, where $c>0$ is independent of $k$ and $t$
and 
\begin{equation*}
\sum\limits_{k\in \mathbb{Z},2^{k}\geq 2t^{\frac{1}{\beta +1}}}2^{k\alpha
}V_{1,k}(t)\leq ct^{\sigma p}.
\end{equation*}%
We decompose $\triangle _{h}^{2}f(x)$ into $\sum_{i=1}^{5}\varpi _{i}(x,h)$,
where%
\begin{equation*}
\varpi _{1}(x,h)=\left\vert x+h\right\vert ^{\delta }\big(g(\left\vert
x+2h\right\vert ^{-\beta })+g(\left\vert x\right\vert ^{-\beta
})-2g(\left\vert x+h\right\vert ^{-\beta })\big)\varrho \left( \left\vert
x+2h\right\vert \right) ,
\end{equation*}%
\begin{align*}
\varpi _{2}(x,h) =&(\left\vert x+2h\right\vert ^{\delta }-\left\vert
x+h\right\vert ^{\delta })g(\left\vert x+2h\right\vert ^{-\beta })\varrho
(\left\vert x+2h\right\vert ), \\
\varpi _{3}(x,h) =&(\left\vert x\right\vert ^{\delta }-\left\vert
x+h\right\vert ^{\delta })g(\left\vert x\right\vert ^{-\beta })\varrho
(\left\vert x\right\vert ),
\end{align*}%
\begin{equation*}
\varpi _{4}(x,h)=2\left\vert x+h\right\vert ^{\delta }g(\left\vert
x+h\right\vert ^{-\beta })\big(\varrho (\left\vert x+2h\right\vert )-\varrho
(\left\vert x+h\right\vert )\big)
\end{equation*}%
and%
\begin{equation*}
\varpi _{5}(x,h)=\left\vert x+h\right\vert ^{\delta }g(\left\vert
x\right\vert ^{-\beta })\big(\varrho (\left\vert x\right\vert )-\varrho
(\left\vert x+2h\right\vert )\big).
\end{equation*}%
Obviously we need only to estimate $\varpi _{1}$. We split%
\begin{equation*}
2g(\left\vert x+2h\right\vert ^{-\beta })+2g(\left\vert x\right\vert
^{-\beta })-4g(\left\vert x+h\right\vert ^{-\beta })
\end{equation*}%
into three terms i.e., $\vartheta _{1}(x,h)+\vartheta _{2}(x,h)+\vartheta
_{3}(x,h)$, where%
\begin{align*}
&\vartheta _{1}(x,h) \\
=&g(\left\vert x+2h\right\vert ^{-\beta })-g(2\left\vert x+h\right\vert
^{-\beta }-\left\vert x\right\vert ^{-\beta })+g(\left\vert x\right\vert
^{-\beta })-g(2\left\vert x+h\right\vert ^{-\beta }-\left\vert
x+2h\right\vert ^{-\beta }),
\end{align*}%
\begin{equation*}
\vartheta _{2}(x,h)=g(\left\vert x+2h\right\vert ^{-\beta })+g(2\left\vert
x+h\right\vert ^{-\beta }-\left\vert x+2h\right\vert ^{-\beta
})-2g(\left\vert x+h\right\vert ^{-\beta })
\end{equation*}%
and%
\begin{equation*}
\vartheta _{3}(x,h)=g(\left\vert x\right\vert ^{-\beta })+g(2\left\vert
x+h\right\vert ^{-\beta }-\left\vert x\right\vert ^{-\beta })-2g(\left\vert
x+h\right\vert ^{-\beta }).
\end{equation*}%
Define%
\begin{equation*}
\tilde{\vartheta}_{i}(x,t)=t^{-n}\int_{|h|<t}|\vartheta _{i}(x,h)|dh,\quad
i\in \{1,2,3\}.
\end{equation*}%
Let%
\begin{equation*}
(J_{i,k}(t))^{p}=\int_{(\mathbb{R}^{n}\backslash B(0,2t^{\frac{1}{\beta +1}%
}))\cap A}\left\vert x\right\vert ^{\delta p}|\tilde{\vartheta}%
_{i}(x,t)|^{p}\chi _{k}(x)dx,\quad i\in \{1,2,3\}.
\end{equation*}%
Observe that $g^{(1)}\in B_{\infty ,\infty }^{\gamma -1}(\mathbb{R}%
)\hookrightarrow L^{\infty }(\mathbb{R})$. Again by the mean value theorem;%
\begin{equation*}
\left\vert \left\vert x+2h\right\vert ^{-\beta }+\left\vert x\right\vert
^{-\beta }-2\left\vert x+h\right\vert ^{-\beta }\right\vert \leq
c|h|^{2}|x|^{-\beta -2},\quad \left\vert x\right\vert \geq 2\left\vert
h\right\vert ^{\frac{1}{\beta +1}},
\end{equation*}%
which yields that%
\begin{equation*}
(J_{1,k}(t))^{p}\lesssim t^{2p}\int_{A\cap C_{k}}|x|^{(\delta -(\beta
+2))p}\,dx\lesssim t^{2p}2^{k((\delta -(\beta +2))p+n)}.
\end{equation*}%
We also obtain%
\begin{equation*}
(J_{i,k}(t))^{p}\lesssim t^{\gamma p}\int_{A\cap C_{k}}|x|^{(\delta -\gamma
(\beta +1))p}\,dx\lesssim t^{\gamma p}2^{k((\delta -\gamma (\beta
+1))p+n)},\quad i\in \{2,3\}.
\end{equation*}%
Therefore%
\begin{align*}
\sum\limits_{k\in \mathbb{Z},2t^{\frac{1}{\beta +1}}\leq 2^{k}\leq
4e^{-2}}(J_{1,k}(t))^{p} \lesssim &t^{2p}\sum\limits_{k\in \mathbb{Z},2t^{%
\frac{1}{\beta +1}}\leq 2^{k}\leq 4e^{-2}}2^{k(\delta +\frac{n+\alpha }{p}%
-(\beta +2))p} \\
\lesssim &\max \big(t^{(2+\sigma -\frac{\beta +2}{\beta +1})p},t^{2p}\big)
\end{align*}%
and%
\begin{align*}
\sum\limits_{k\in \mathbb{Z},2^{k}\geq 2t^{\frac{1}{\beta +1}%
}}(J_{i,k}(h))^{p} \lesssim &t^{\gamma p}\sum\limits_{k\in \mathbb{Z}%
,2^{k}\geq 2t^{\frac{1}{\beta +1}}}2^{k(\delta +\frac{n+\alpha }{p}-\gamma
(\beta +1))p},\quad i\in \{2,3\} \\
\lesssim &t^{\sigma q},
\end{align*}%
since $\sigma <\gamma $. Hence%
\begin{equation*}
I_{2}(t)\lesssim t^{\sigma p}+\max \big(t^{(2+\sigma -\frac{\beta +2}{\beta
+1})p},t^{2p}\big).
\end{equation*}%
Collecting the estimates of $I_{1}$ and $I_{2}$ we have proved $f\in
B_{p,\infty }^{\sigma }(\mathbb{R}^{n},|\cdot |^{\alpha })$ with $1\leq
\sigma <2$. The proof is complete.
\end{proof}

\textbf{Acknowledgements}$\newline
$ The author would like to thank W. Sickel for valuable discussions and
suggestions. This work was supported\ by the General Direction of Higher
Education and Training under\ Grant No. C00L03UN280120220004, Algeria.


Douadi Drihem

M'sila University, Department of Mathematics,

Laboratory of Functional Analysis and Geometry of Spaces,

M'sila 28000, Algeria,

e-mail: \texttt{douadidr@yahoo.fr, douadi.drihem@univ-msila.dz}

\end{document}